\newtheorem{thm}{}[section]
\newtheorem{theorem}[thm]{Theorem}
\newtheorem{corollary}[thm]{Corollary}
\newtheorem{lemma}[thm]{Lemma}
\theoremstyle{definition}
\newtheorem{definition}[thm]{Definition}
\theoremstyle{remark}
\newtheorem{remark}[thm]{Remark}
\numberwithin{equation}{section}
\newcommand{\dem}{\ensuremath{\bm{\mu}}}
\newcommand{\Leb}{\ensuremath{\bm{L}}}
\newcommand{\Ind}{\ensuremath{\mathbbm{1}}}
\newcommand{\HB}{\ensuremath{\mathcal{H}}}
\newcommand{\Mt}{\ensuremath{\mathcal{M}}}
\newcommand{\Nt}{\ensuremath{\mathcal{N}}}
\newcommand{\ldf}{\ensuremath{\bm{\varphi_l}}}
\newcommand{\udf}{\ensuremath{\bm{\varphi_u}}}
\newcommand{\Disc}{\ensuremath{\mathbb{D}}}
\newcommand{\CC}{\ensuremath{\mathbb{C}}}
\newcommand{\RR}{\ensuremath{\mathbb{R}}}
\newcommand{\sss}{\ensuremath{\bm{s}}}
\newcommand{\ww}{\ensuremath{\bm{w}}}
\newcommand{\bb}{\ensuremath{\bm{b}}}
\newcommand{\ee}{\ensuremath{\bm{e}}}
\newcommand{\xx}{\ensuremath{\bm{x}}}
\newcommand{\yy}{\ensuremath{\bm{y}}}
\newcommand{\zz}{\ensuremath{\bm{z}}}
\newcommand{\hh}{\ensuremath{\bm{h}}}
\newcommand{\EE}{\ensuremath{\mathcal{E}}}
\newcommand{\Unt}{\ensuremath{\mathbb{E}}}
\newcommand{\BB}{\ensuremath{\mathcal{B}}}
\newcommand{\GG}{\ensuremath{\mathcal{G}}}
\newcommand{\NN}{\ensuremath{\mathbb{N}}}
\newcommand{\FF}{\ensuremath{\mathbb{F}}}
\newcommand{\XX}{\ensuremath{\mathbb{X}}}
\newcommand{\kk}{\ensuremath{\bm{k}}}
\newcommand{\Fou}{\ensuremath{\mathcal{F}}}
\newcommand{\SSB}{\ensuremath{\mathcal{S}}}
\newcommand{\Ts}{\ensuremath{\mathcal{T}}}
\newcommand{\UU}{\ensuremath{\mathcal{R}}}
\newcommand{\Sym}{\ensuremath{\mathbb{S}}}
\newcommand{\SL}{\ensuremath{\mathscr{L}}}
\newcommand{\XB}{\ensuremath{\mathcal{X}}}
\newcommand{\YB}{\ensuremath{\mathcal{Y}}}
\newcommand{\LL}{\ensuremath{\mathbb{B}}}
\newcommand{\YY}{\ensuremath{\mathbb{Y}}}
\newcommand{\VV}{\ensuremath{\mathbb{V}}}
\newcommand{\norm}[1]{\left\lVert #1\right\rVert}
\newcommand{\abs}[1]{\left\lvert #1\right\rvert}
\DeclareMathOperator{\spn}{span}
\DeclareMathOperator{\sgn}{sign}
\DeclareMathOperator*{\Ave}{Ave}
\begin{document}
\title[Democracy of quasi-greedy bases in $p$-Banach spaces]{Democracy of quasi-greedy bases in $\bm p$-Banach spaces with applications to the efficiency of the Thresholding Greedy Algorithm in the Hardy spaces $\bm{H_{p}(\Disc^{d})}$}
\author[F. Albiac]{Fernando Albiac}
\address{Department of Mathematics, Statistics and Computer Sciences, and InaMat$^2$\\ Universidad P\'ublica de Navarra\\
Pamplona 31006\\ Spain}
\email{fernando.albiac@unavarra.es}

\author[J. L. Ansorena]{Jos\'e L. Ansorena}
\address{Department of Mathematics and Computer Sciences\\
Universidad de La Rioja\\
Logro\~no 26004\\ Spain}
\email{joseluis.ansorena@unirioja.es}

\author[G. Bello]{Glenier Bello}
\address{Mathematics Department\\
Universidad Aut\'onoma de Madrid\\
28049 Madrid\\
Spain}
\email{glenier.bello@uam.es}
\subjclass[2010]{46B15, 46A16, 41A65}
\keywords{quasi-greedy basis,  democratic basis, quasi-Banach spaces, Hardy spaces}
\begin{abstract}
We use new methods, specific of non-locally convex quasi-Banach spaces, to investigate when the quasi-greedy bases of a $p$-Banach space for $0<p<1$ are democratic. The novel techniques we obtain permit to show in particular that all quasi-greedy bases of the Hardy space $H_p(\Disc)$ for $0<p<1$ are democratic while, in contrast, no quasi-greedy basis of $H_p(\Disc^d)$ for $d\ge 2$ is, solving thus a problem that was raised in \cite{AAW2021}. Applications of our results to other spaces of interest both in functional analysis and approximation theory are also provided.
\end{abstract}
\thanks{The first-named author acknowledges the support of the Spanish Ministry for Science and Innovation under Grant PID2019-107701GB-I00 for \emph{Operators, lattices, and structure of Banach spaces}. F. Albiac and J. L Ansorena were supported by the Spanish Ministry for Science, Innovation, and Universities under Grant PGC2018-095366-B-I00 for \emph{An\'alisis Vectorial, Multilineal y Aproximaci\'on}. G. Bello was supported
by National Science Centre, Poland grant UMO-2016/21/B/ST1/00241.}
\maketitle
\section{Introduction}\noindent
The formal development of a theory of greedy bases was spurred at the turn of the 21st century by the work of Konyagin and Temlyakov on the efficiency of the Thresholding Greedy Algorithm (TGA for short) in Banach spaces \cite{KoTe1999}. The TGA simply takes $m$ terms with maximum absolute values of coefficients from the expansion of a signal (function) relative to a fixed representation system (a basis). Different greedy algorithms originate from different ways of choosing the coefficients of the linear combination in the $m$-term approximation to the signal. Another name, commonly used in the literature for $m$-term approximation is \emph{sparse approximation}. Sparse approximation of functions is a powerful analytic tool which is present in many important applications to image and signal processing, numerical computation, or compressed sensing, to name but a few.

The simplicity in the implementation of the TGA and its connections with the geometry of the space attracted the attention of researchers with a more classical Banach space theory background and as a result, the last two decades have a seen a great progress in the functional analytic aspects of greedy approximation theory.

However, both from the abstract point of view of functional analysis as well as its applications, the development of a parallel theory of greedy bases for non-locally convex spaces was left out of the game, not because these spaces lack intrinsic interest but because of the absence of the foundational results that would cement this new ramification of the theory. It can be conceded that the locally convex case is more often used, especially in practical numerical computations, in large part due to the fact that convex algorithms are easy to implement. Nevertheless, there exist very well established scales of spaces in the non-locally convex setting that arise naturally in analysis and have been extensively studied, where it is necessary to do approximation theory. Take for instance the Hardy spaces of analytic functions on various domains in $\CC^{n}$ (see \cite{Duren1970}), Bergman spaces of analytic functions on various domains in $\CC^{n}$ (see \cite{HKZ1999}), Fefferman-Stein real Hardy spaces (see \cite{FS1972}), Besov, Sobolev, and Triebel-Lizorkin spaces (see \cite{Triebel2010}). Apart from those spaces, approximation theory in non-locally convex spaces plays a very important role in problems that arise in diverse areas such as approximation spaces, solutions of PDE's with data in Hardy spaces or Besov spaces that are not Banach spaces \cite{KMM}, approximation spaces and wavelet numerical methods \cite{Cohen1988}, layer potentials and boundary-value problems for second order elliptic operators with data in Besov Spaces, as in \cite{BM2016}. Approximation theory in non-locally convex spaces appears also naturally when studying interpolation problems even in the framework of Banach spaces. For example, the weak Lorentz space $L_{1,\infty}$ plays a key role in Marcinkiewicz interpolation theorem \cite{BennettSharpley1988}.

As was recently shown in \cite{AABW2021}, the main types of bases that are of interest in greedy approximation in the setting of Banach spaces, namely greedy, almost greedy, and quasi-greedy bases, are suitable as well for the use of the TGA in $p$-Banach spaces for $p<1$. The article \cite{AABW2021} was the springboard for subsequent research of different aspects related to the greedy algorithm in $p$-Banach spaces for $p<1$ (see \cites{AAW2021b, AAW2021}).
Our aim in this paper is to continue investigating the connection between quasi-greedy bases and their democracy functions in $p$-Banach spaces, initiated in \cite{AAW2021}, with an eye to the qualitative and quantitative study of the efficiency of the greedy algorithm in Hardy spaces $H_{p}$ and other important non-locally convex quasi-Banach spaces.

Both quasi-greedy and democratic bases were introduced by Konyagin and Temlyakov back in 1999 in their seminal paper \cite{KoTe1999}. These two special types of bases can nowadays be regarded in hindsight as the pillars that sustain the entire theory of greedy approximation in Banach spaces using bases. Certainly, apart from the pioneering characterization of greedy bases as those bases that are simultaneously unconditional and democratic from \cite{KoTe1999} this claim is supported by the subsequent characterization of almost greedy bases in Banach spaces as those bases that are at the same time quasi-greedy and democratic \cite{DKKT2003}.

Thus, although a priori, being almost greedy is more restrictive for a basis than being quasi-greedy, there exist spaces whose geometry forces quasi-greedy basis to be democratic. This is what happens for example with $c_0$, $\ell_2$, and $\ell_1$. The specific techniques used to prove that property in each space exhibit the critical structural aspect involved in making quasi-greedy basis be democratic. In $\ell_2$ the decisive ingredient is the fact that
its Rademacher type and cotype are $2$ (\cite{Woj2000}). In the space $\ell_1$ it is crucial that it is a GT-space \cite{DSBT2012}, while for $c_0$ what matters is that its dual is a GT-space \cite{DKK2003}.

The situation is different in $\ell_{p}$ when $p\in(1,2)\cup(2,\infty)$. Indeed, these spaces have unconditional bases (in particular, quasi-greedy bases) that are not democratic: To see this, one just has to remember that when $p\in(1,2)\cup(2,\infty)$, the space $\ell_{p}$ is isomorphic to $(\bigoplus_{n=1}^\infty\ell_2^n)_{\ell_p}$ \cite{Pel1960}, whose canonical basis is unconditional but not democratic.

The study of greedy-like bases in non-locally convex spaces sprang from the paper \cite{AABW2021}, where it is proved that the aforementioned characterizations of greedy bases and almost greedy bases remain valid in this more general framework. In this spirit, the authors of \cite{AAW2021} ventured out beyond the ``psychological" border of the index $p=1$ and proved that all quasi-greedy bases in the spaces $\ell_{p}$ for $0<p<1$ are democratic. The non-locally convex nature of these spaces required the introduction of new techniques in order to determine how the geometry of the space shapes the structure of their quasi-greedy bases. The results obtained in \cite{AAW2021} heavily rely on the theory of $\SL_p$-spaces for $0<p<1$ developed by Kalton in \cite{Kalton1984}.

In this paper we investigate the democracy of quasi-greedy bases in other classical non-locally convex quasi-Banach spaces. Our approach permits to obtain, for instance, that all quasi-greedy bases of the Hardy space $H_p(\Disc)$ for $0<p<1$ (as well as all quasi-greedy bases of its complemented subspaces) are democratic, solving thus in the positive Question 3.8 from \cite{AAW2021} in the case when $d=1$. As far as the Hardy spaces $H_p(\Disc^d)$ for $d\ge 2$ is concerned, Wojtaszczyk \cite{Woj2000} had shown that its canonical basis (which is unconditional) is not democratic. Here, our contribution consists of proving that, actually, no quasi-greedy basis of these multivariate Hardy spaces is democratic. Let us point out that the approach used to prove that quasi-greedy bases in $\ell_p$ are democratic falls short for the Hardy spaces, since the linear structure of the latter spaces is far more complex that the former. The new techniques that we had to develop to tackle the problem critically depend on the convexity properties of a quasi-Banach space regarded as a quasi-Banach lattice with the structure induced by its (unique) unconditional basis.

Our research suggests the pattern that if a quasi-Banach space $\XX$ (locally convex or otherwise) has a unique unconditional basis (up to equivalence and permutation) which is democratic (hence greedy) then all quasi-greedy bases of $\XX$ are democratic (hence almost greedy).

We also obtain valuable information about the democracy functions of quasi-greedy bases $\XB$ of other classical quasi-Banach spaces $\XX$ with a unique unconditional basis. For instance, we prove that the mixed-norm sequence spaces
$\ell_q\oplus\ell_p$ for $0<p<q<1$ have no almost greedy bases.

We close this introduction with a brief description of the contents of the paper. Section~\ref{sect:main} includes our advances in the theory of quasi-greedy bases. Previously, in Section~\ref{sect:preliminary} we set the terminology that we will use and gather some background results that we will need. In Section~\ref{sect:Hp} we provide applications to important spaces in functional analysis and approximation theory.

\section{Terminology and background}\label{sect:preliminary}\noindent
Throughout this paper we use standard facts and notation from Banach spaces and approximation theory (see, e.g., \cite{AlbiacKalton2016}). The reader will find the required specialized background and notation on greedy-like bases in quasi-Banach spaces in \cite{AABW2021}. Nonetheless, we record the notation that is most heavily used.

\subsection{Bases in quasi-Banach spaces}
Throughout this paper, a \emph{basis} of a quasi-Banach space $\XX$ over the real or complex field $\FF$ will be a norm-bounded countable family $\XB=(\xx_n)_{n\in \Nt}$ which generates the entire space $\XX$, and for which there is a (unique) norm-bounded family $\XB^*=(\xx_n^*)_{n\in \Nt}$ in the dual space $\XX^*$ such that $(\xx_n, \xx_n^*)_{n\in \Nt}$ is a biorthogonal system. A \emph{basic sequence} will be a sequence in $\XX$ which is a basis of its closed linear span. If $\XB=(\xx_n)_{n\in \Nt}$ is a basis, then it is \emph{semi-normalized}, i.e.,
\[
0<\inf_{n\in \Nt} \norm{\xx_n}\le \sup_{n\in \Nt} \norm{\xx_n}<\infty,
\]
and $\XB^*$ is a basic sequence called the \emph{dual basis} of $\XB$. Note that semi-normalized Schauder bases are a particular case of bases.

Given a linearly independent family of vectors $\XB=(\xx_n)_ {n\in \Nt}$ in $\XX$ and scalars $\gamma=(\gamma_n)_{n\in \Nt}\in\FF^\Nt$, we consider the map
\[
S_\gamma=S_\gamma[\XB,\XX]\colon \spn( \xx_n \colon n\in\NN) \to \XX,
\quad \sum_{n\in \Nt} a_n\, \xx_n \mapsto \sum_{n\in \Nt} \gamma_n\, a_n \, \xx_n.
\]
The family $\XB$ is an \emph{unconditional basis} of $\XX$ if and only if it generates the whole space $\XX$ and $S_\gamma$ is well-defined and bounded on $\XX$ for all $\gamma\in\ell_\infty$, in which case, the uniform boundedness principle yields
\begin{equation}\label{eq:lu}
K_{u}=K_{u}[\XB,\XX]:=\sup_{\norm{\gamma}_\infty\le 1} \norm{S_\gamma}<\infty.
\end{equation}
If $\XB$ is an unconditional basis, $K_u$ is called its \emph{unconditional basis constant.} Now, given $A\subseteq \NN$, we define the \emph{coordinate projection} onto $A$ (with respect to the sequence $\XB$) as
\[
S_A=S_{\gamma_A}[\XB,\XX],
\]
where $\gamma_A=(\gamma_n)_{n\in \Nt}$ is the family defined by $\gamma_n=1$ if $n\in A$ and $\gamma_n=0$ otherwise. It is known (see, e.g., \cite{AABW2021}*{Theorem 2.10}) that $\XB$ is an unconditional basis if and only if it generates $\XX$ and it is \emph{suppression unconditional}, i.e.,
\[
\sup \{\norm{S_A}\colon A\subseteq\NN \mbox{ finite}\}<\infty.
\]
Unconditional bases (indexed on the set $\NN$ of natural numbers) are a particular case of Schauder bases, and so semi-normalized unconditional bases are a particular case of bases.

\subsection{Quasi-greedy bases}
Given a basis $\XB=(\xx_n)_{n\in \Nt}$ of a quasi-Banach space $\XX$, with dual basis $\XB^*=(\xx_n^*)_{n\in \Nt}$, the \emph{coefficient transform}
\begin{equation*}\label{eq:Fourier}
\Fou\colon \XX \to \FF^\Nt , \quad f\mapsto (\xx_n^*(f))_{n\in \Nt}
\end{equation*}
is a bounded linear operator from $\XX$ into $c_0$. Thus, for each $m\in\NN$ there is a unique $A=A_m(f)\subseteq \NN$ of cardinality $\abs{A}=m$ such that whenever $i\in A$ and $j \in \NN\setminus A$, either $\abs{a_i}>\abs{a_j}$ or $\abs{a_i}=\abs{a_j}$ and $i<j$.
The \emph{$m$th greedy approximation} to $f\in\XX$ with respect to the basis $\XB$ is
\[
\GG_m(f)=\GG_m[\XB,\XX](f):=S_{A_m(f)}(f).
\]
Note that the operators $(\GG_m)_{m=1}^{\infty}$ defining the \emph{greedy algorithm} on $\XX$ with respect to $\XB$ are not linear nor continuous. The basis $\XB$ is said to be \emph{quasi-greedy} if there is a constant $C\ge 1$ such that
\[
\norm{\GG_m(f)}\le C\norm{f}, \quad f\in\XX, \, m\in\NN.
\]
Equivalently, by \cite{Woj2000}*{Theorem 1} (cf.\ \cite{AABW2021}*{Theorem 4.1}), these are precisely the bases for which the greedy algorithm converges, i.e.,
\[
\lim_{m\to\infty} \GG_{m}(f)=f, \quad f\in \XX.
\]

\subsection{Truncation quasi-greedy bases}
Another family of nonlinear operators of key relevance in the study of the greedy algorithm in a quasi-Banach space $\XX$ with respect to a basis $\XB=(\xx_n)_{n\in \Nt}$ is the sequence $(\UU_m)_{m=1}^{\infty}$ of restricted truncation operators defined as follows. Let
\[
\Unt=\{\lambda\in\FF \colon \abs{\lambda}=1\}.
\]
Given $A\subset\Nt$ finite and $\varepsilon=(\varepsilon_n)_{n\in A}\in\Unt^A$ we set
\[
\Ind_{\varepsilon,A}= \sum_{n\in A} \varepsilon_n \,\xx_n,
\]
and given $f\in\XX$ we define $\varepsilon(f)\in\Unt^{\Nt}$ by
\[
\varepsilon(f)=(\sgn(\xx_n^*(f)))_{n\in\Nt},
\]
where, as is customary, $\sgn(\cdot)$ denotes the sign function, i.e., $\sgn(0)=1$ and $\sgn(a)=a/\abs{a}$ if $a\in\FF\setminus\{0\}$. For $m\in \NN$, the $m$\emph{th-restricted truncation operator} $\UU_m\colon \XX \to \XX$ is the map
\[
\UU_m(f)=\min_{n\in A_m(f)} \abs{\xx_n^*(f)} \Ind_{\varepsilon(f),A_m(f)}, \quad f\in\XX,
\]
and the basis $\XB$ is said to be \emph{truncation quasi-greedy} if
\begin{equation}\label{eq:Cr}
C_r:=\sup_{m\in\NN}\norm{\UU_m}<\infty.
\end{equation}

For the sake of generality, most of our results below will be stated and proved for truncation quasi-greedy bases (or even for bases fulfilling weaker unconditionality conditions such as being UCC). However, the uneasy reader can safely replace ``truncation quasi-greedy basis'' with ``quasi-greedy basis'' and can rest assured of the validity of the corresponding statements thanks to the following theorem.

\begin{theorem}[\cite{AABW2021}*{Theorem 4.13}]\label{thm:QGBRTOP}
If $\XB$ is a quasi-greedy basis in a quasi-Banach space $\XX$ then $\XB$ is truncation quasi-greedy.
\end{theorem}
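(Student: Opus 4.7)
The plan is to use a perturbation argument. Fix $f \in \XX \setminus \{0\}$ and $m \in \NN$, and set $B := A_m(f)$, $\alpha := \min_{n \in B}\abs{\xx_n^*(f)}$, and $\varepsilon := \varepsilon(f)$, so that $\UU_m(f) = \alpha \Ind_{\varepsilon, B}$. For each $t > 0$, consider the perturbed vector
\[
g_t := (f - \GG_m(f)) + (\alpha + t)\Ind_{\varepsilon, B}.
\]
Its coefficients are $(\alpha + t)\sgn(\xx_n^*(f))$ on $B$ (absolute value $\alpha + t$) and $\xx_n^*(f)$ on $\NN \setminus B$ (absolute value at most $\alpha < \alpha + t$). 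Hence $A_m(g_t) = B$ with strict inequalities, and $\GG_m(g_t) = (\alpha + t)\Ind_{\varepsilon, B}$. Denoting by $C_q$ the quasi-greedy constant of $\XB$, quasi-greediness yields $(\alpha + t)\norm{\Ind_{\varepsilon, B}} \leq C_q\norm{g_t}$; letting $t \to 0^+$ produces the core inequality
\[
\norm{\UU_m(f)} \leq C_q \norm{(f - \GG_m(f)) + \UU_m(f)}.
\]

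The remaining task, which I expect to be the main obstacle, is to bound the right-hand side by a constant multiple of $\norm{f}$. Writing it as $f - (\GG_m(f) - \UU_m(f))$ and invoking the quasi-triangle inequality directly introduces an irreducible $\norm{\UU_m(f)}$ term and leads to a circular estimate that does not close because $C_q \geq 1$. To break the circularity, I would use the telescoping decomposition
\[
\GG_m(f) - \UU_m(f) = \sum_{j=1}^{m-1}(\alpha_j - \alpha_{j+1})\Ind_{\varepsilon(f), A_j(f)} = \sum_{j=1}^{m-1}\bigl(1 - \tfrac{\alpha_{j+1}}{\alpha_j}\bigr)\UU_j(f),
\]
where $\alpha_j$ denotes the $j$th largest absolute value among the coefficients of $f$. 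Applying the $p$-subadditivity of the quasi-norm to the $p$-power of this sum, an induction on $m$ would reduce the problem to bounds for $\UU_j(f)$ with $j < m$. The uniform estimate $\norm{f - \GG_m(f)} \leq \kappa(1 + C_q)\norm{f}$, a direct consequence of quasi-greediness and the quasi-triangle inequality (with $\kappa$ the modulus of concavity of $\XX$), feeds the base case.

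The genuine crux is to keep the inductive constants uniformly bounded in $m$. The naive coefficients $(1 - \alpha_{j+1}/\alpha_j)^p$ can in the worst case produce a sum of order $m$, too large to permit a direct induction. A refined approach---either exploiting that $\sum_j\log(\alpha_j/\alpha_{j+1})$ telescopes to $\log(\alpha_1/\alpha_m)$ and can thus be controlled through the basis constants, or leveraging the specific $p$-Banach geometry in a subtler way (for instance by pairing the perturbation above with its negative analogue $g_t' := (f - \GG_m(f)) - (\alpha + t)\Ind_{\varepsilon, B}$ and averaging the resulting bounds)---seems unavoidable. This is the step where the non-locally-convex structure of $\XX$ must be used in an essential way, and where I expect the main technical difficulty of the proof to reside.
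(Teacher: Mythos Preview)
The paper does not prove this statement; Theorem~\ref{thm:QGBRTOP} is quoted from \cite{AABW2021}*{Theorem~4.13} and used as a black box. So there is no proof in the present paper against which to compare your attempt.

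As for the attempt itself: it is not a proof but a partial sketch, and you say as much. The perturbation step is correct and yields $\norm{\UU_m(f)}\le C_q\norm{g_0}$ with $g_0=(f-\GG_m(f))+\UU_m(f)$; this is essentially the ``quasi-greedy for largest coefficients'' inequality, and it is indeed one ingredient of the argument in \cite{AABW2021}. But the decisive step---bounding $\norm{g_0}$ by a fixed multiple of $\norm{f}$---is left open, and neither of the two strategies you float closes the gap. The telescoping identity for $\GG_m(f)-\UU_m(f)$ combined with induction on $m$ and $p$-subadditivity produces, as you yourself note, a sum that can be of order $m$ when the $\alpha_j$ are nearly constant; observing that $\sum_j\log(\alpha_j/\alpha_{j+1})$ telescopes to $\log(\alpha_1/\alpha_m)$ does not help, because $\sum_j(1-\alpha_{j+1}/\alpha_j)^p$ can be of order $m$ even when that logarithm is bounded. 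And averaging $g_t$ with its sign-flipped counterpart $g_t'$ is a convexity device that carries no weight when $p<1$, which is exactly the regime where the theorem goes beyond the classical Banach-space result.

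In short, you have correctly located the obstacle but not removed it. The proof in \cite{AABW2021} is organized differently: rather than inducting on $m$ index by index, one groups the coordinates of the greedy set into dyadic blocks according to the size of $\abs{\xx_n^*(f)}$, controls each block via two greedy projections of $f$, and then uses $p$-subadditivity together with the geometric factor $2^{-kp}$ coming from the block scaling to obtain a series that converges uniformly in $m$. That geometric decay is the mechanism your one-step telescoping cannot produce.
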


Semi-normalized unconditional bases are a special kind of quasi-greedy bases, and although the converse is not true in general, quasi-greedy bases always retain in a certain sense a flavour of unconditionality. For example, truncation quasi-greedy bases of quasi-Banach spaces are \emph{unconditional for constant coefficients} (UCC, for short) \cite{AABW2021}*{Proposition 4.16}. This means that there is a constant $C\ge 1$ such that $\norm{\Ind_{\varepsilon,A}}\le \norm{\Ind_{\varepsilon,B}}$ whenever $A$, $B$ are finite subsets of $\NN$ with $A\subseteq B$ and $\varepsilon\in\Unt^B$. If a basis $\XB$ is UCC then there is another constant $C_u\ge 1$ such that
\begin{equation}\label{eq:succ}
\norm{\Ind_{\delta,A}}\le C_u\norm{\Ind_{\varepsilon,A}}
\end{equation}
for all finite subsets $A$ of $\NN$ and all choices of signs $\delta$ and $\varepsilon\in\Unt^A$ (see \cite{AABW2021}*{Lemma 3.2}; for a detailed discussion on the unconditionality-related properties enjoyed by truncation quasi-greedy bases in quasi-Banach spaces we refer to \cite{AABW2021}*{Section 3}).

\subsection{Democracy functions}
Given a basis $\XB=(\xx_n)_{n\in \Nt}$ of a quasi-Banach space $\XX$ and $A\subseteq\Nt$ finite, we set $\Ind_A=\Ind_{\varepsilon,A}$, where $\varepsilon =1$ on $A$. The basis $\XB$ is said to be \emph{democratic} if there is a constant $D\ge 1$ such that
\[
\norm{\Ind_A}\le D \norm{\Ind_B}
\]
for any two finite subsets $A$ and $B$ of $\Nt$ with $\abs{A}\le \abs{B}$. The lack of democracy of a basis $\XB$ exhibits some sort of asymmetry. To measure how much a basis $\XB$ deviates from being democratic, we consider its \emph{upper democracy function}, also known as its \emph{fundamental function},
\[
\udf[\XB, \XX](m): = \udf(m)=\sup_{\abs{A}\le m}\norm{\Ind_A},\quad m\in\NN,
\]
and its \emph{lower democracy function},
\[
\ldf[\XB, \XX](m):= \ldf(m)=\inf_{\abs{A}\ge m}\norm{\Ind_A}, \quad m\in\NN.
\]
Notice that given a basis $\XB$ there is a constant $C_s$, depending only on the modulus of concavity of the space $\XX$, such that
\begin{equation}\label{eq:DemImp}
\norm{\sum_{n\in A} a_n\, \xx_n} \le C_s \udf[\XB,\XX](m),\quad \abs{A}\le m,\; \abs{a_n}\le 1.
\end{equation}

In the particular case that the basis $\XB$ is UCC the following hold:
\begin{enumerate}[label=(\roman*), leftmargin=*, widest=iii]
\item $\ldf(m)\lesssim\udf(m)$ for $m\in\NN$;
\item $\inf_{\abs{A}= m}\norm{\Ind_A}\lesssim \ldf[\XB, \XX](m)$ for $m\in\NN$; and
\item $\XB$ is democratic if and only $\udf(m)\lesssim\ldf(m)$ for $m\in\NN$, in which case it is \emph{super-democratic}, i.e., there is a constant $D\ge 1$ such that
\begin{equation*}
\norm{\Ind_{\varepsilon,A}} \le D \norm{\Ind_{\delta,B}}
\end{equation*}
for any two finite subsets $A$, $B$ of $\NN$ with $\abs{A}\le\abs{B}$, any $\varepsilon\in\EE^A$ and any $\delta\in\EE^B$.
\end{enumerate}
Here and throughout this paper, the symbol $\alpha_j\lesssim \beta_j$ for $j\in \Nt$ means that there is a positive constant $C$ such that the families of non-negative real numbers $(\alpha_j)_{j\in \Nt}$ and $(\beta_j)_{j\in \Nt}$ are related by the inequality $\alpha_j\le C\beta_j$ for all $j\in \Nt$. If $\alpha_j\lesssim \beta_j$ and $\beta_j\lesssim \alpha_j$ for $j\in \Nt$ we say $(\alpha_j)_{j\in \Nt}$ are $(\beta_j)_{j\in \Nt}$ are equivalent, and we write $\alpha_j\approx \beta_j$ for $j\in \Nt$.

\subsection{Quasi-Banach lattices}
Let $0<r\le\infty$. A quasi-Banach space $\XX$ is (topologically) \emph{$r$-convex} or \emph{$r$-normable} if there is a constant $C\ge 1$ such that
\begin{equation}\label{eq:pconvex}
\norm{\sum_{i\in A} f_i}\le C
\left(\sum_{i\in A}\norm{f_i}^r\right)^{1/r}
,\quad \abs{A}<\infty,\, f_i\in\XX.
\end{equation}
If a quasi-Banach space is $r$-convex then $r\le 1$. Conversely, by the Aoki-Rolewicz theorem any quasi-Banach space is $r$-convex for some $r\in(0,1]$. In turn, any $r$-convex quasi-Banach space $\XX$ becomes an $r$-Banach space under a suitable renorming, i.e., $\XX$ can be endowed with an equivalent quasi-norm satisfying \eqref{eq:pconvex} with $C=1$.

The existence of a lattice structure in $\XX$ leads to a (related but) different notion of convexity. A quasi-Banach lattice $\XX$ is said to be \emph{$r$-convex} ($0<r<\infty$) if there is a constant $C$ such that
\begin{equation}\label{eq:ConvexLattice}
\norm{\left(\sum_{i\in A}\abs{f_i}^r\right)^{1/r}}\le C \left(\sum_{i\in A}\norm{f_i}^r\right)^{1/r}, \quad \abs{A}<\infty,\; f_i\in\XX.
\end{equation}
where the lattice $r$-sum $
\left(\sum_{i\in A}\abs{f_i}^r\right)^{1/r}\in\XX$ is defined unambiguously exactly as for the case of Banach lattices (cf.\ \cite{LinTza1979}*{pp. 40--41} and \cite{Popa1982}). If $\XX$ is an $r$-convex quasi-Banach lattice, we will denote by $M^{(r)}(\XX)$ the smallest constant $C$ such that \eqref{eq:ConvexLattice} holds.

We will also consider lattice averages and use them to reformulate lattice convexity in those terms. Given $0<r\le \infty$, the $r$-average of a finite family $(f_i)_{i\in A}$ in a quasi-Banach lattice $\XX$ is defined as
\[
\left(\Ave_{i\in A} \abs{f_i}^r\right)^{1/r}=\left(\frac{1}{\abs{A}}\sum_{i\in A} \abs{f_i}^r \right)^{1/r}=\abs{A}^{-1/r} \left(\sum_{i\in A}\abs{f_i}^r\right)^{1/r}.
\]
This way, the quasi-Banach lattice $\XX$ is $r$-convex with $M^{(r)}(\XX)\le C<\infty$ if and only if
\[
\norm{\left(\Ave_{i\in A} (\abs{f_i}^r\right)^{1/r}} \le C \left(\Ave_{i\in A} (\norm{f_i}^r\right)^{1/r}, \quad \abs{A}<\infty,\; f_i\in\XX.
\]

Defining $r$-sums and $r$-averages in quasi-Banach lattices allows us to state a lattice-valued version of Khintchine's inequalities:

\begin{theorem}\label{lem:KintchineLattice}
Let $\XX$ be a quasi-Banach lattice. For each $0<r<\infty$ there are constants $T_r$ and $C_r$ such that for any finite family $(f_i)_{i\in A}$ in $\XX$,
\begin{equation*}
\frac{1}{C_r} \left(\Ave_{\varepsilon_i=\pm 1} \abs{\sum_{i\in A} \varepsilon_i\, f_i}^r\right)^{1/r}
\le \left(\sum_{i\in A} \abs{f_i}^2\right)^{1/2}
\le T_r \left(\Ave_{\varepsilon_i=\pm 1} \abs{\sum_{i\in A} \varepsilon_i\, f_i}^r\right)^{1/r}.
\end{equation*}
\end{theorem}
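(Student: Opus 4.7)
The plan is to reduce the lattice-valued inequality to the classical scalar Khintchine inequality via a homogeneous functional calculus in the quasi-Banach lattice $\XX$. Recall that for each $0<r<\infty$ the scalar Khintchine theorem supplies universal constants $C_r,T_r$ such that for any scalars $(a_i)_{i\in A}$,
\begin{equation*}
\frac{1}{C_r} \left(\Ave_{\varepsilon_i=\pm 1} \abs{\sum_{i\in A} \varepsilon_i a_i}^r\right)^{1/r} \le \left(\sum_{i\in A} \abs{a_i}^2\right)^{1/2} \le T_r \left(\Ave_{\varepsilon_i=\pm 1} \abs{\sum_{i\in A} \varepsilon_i a_i}^r\right)^{1/r}.
\end{equation*}
Both sides are continuous positively homogeneous functions of the scalar variables $(a_i)_{i\in A}$, so the strategy is to transport this pointwise scalar estimate to $\XX$ via functional calculus, one coordinate profile at a time.

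To carry out the transport I would pass to the principal ideal $\XB_0$ of $\XX$ generated by the order unit $e:=\bigvee_{i\in A}\abs{f_i}$, equipped with the order-unit seminorm $\norm{g}_e:=\inf\{t>0 \colon \abs{g}\le t\,e\}$. On this ideal the order structure coincides with that of $\XX$, and $(\XB_0,\norm{\cdot}_e)$ is an AM-space with strong unit $e$; Kakutani's representation identifies it lattice-isometrically with a sublattice of $C(K)$ for a suitable compact Hausdorff $K$, sending each $f_i$ to a continuous function $\hat f_i$. Given any continuous positively homogeneous $F\colon \FF^{\abs{A}}\to\FF$, the element $F(f_i\colon i\in A)\in \XB_0\hookrightarrow \XX$ is then defined unambiguously as the preimage of $F(\hat f_i\colon i\in A)$; this is the standard Krivine/Kalton calculus, and it agrees, for $F(a)=\left(\sum_i\abs{a_i}^r\right)^{1/r}$, with the lattice $r$-sum introduced earlier. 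Applying this calculus coordinate-wise to the scalar Khintchine inequality (which is a pointwise inequality in $C(K)$, valid at every $k\in K$ with $a_i:=\hat f_i(k)$) and pulling back yields the desired inequality in $\XX$.

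The main obstacle I anticipate is the clean setup of the functional calculus in the quasi-Banach context, where Krivine's calculus is not as directly available as in Banach lattices. The device of restricting to the principal ideal with its order-unit norm circumvents the difficulty: $(\XB_0,\norm{\cdot}_e)$ is genuinely locally convex, so the classical machinery applies there, and the inclusion $\XB_0\hookrightarrow\XX$ (bounded, though possibly not isometric in the quasi-norm of $\XX$) need only respect the lattice order in order to transport the resulting inequality. Once that technical point is secured, the remainder of the argument is a direct transcription of the scalar Khintchine inequality, with the constants $C_r$ and $T_r$ depending only on $r$.
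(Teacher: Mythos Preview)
Your proposal is correct and follows essentially the same route as the paper: both arguments transport the scalar Khintchine inequalities to $\XX$ via the homogeneous functional calculus, observing that the two sides of the inequality are continuous positively $1$-homogeneous functions of the variables $(a_i)_{i\in A}$. The paper simply invokes the calculus of \cite{LinTza1979}*{Theorem 1.d.1} (whose extension to quasi-Banach lattices is noted earlier, cf.\ \cite{Popa1982}), whereas you spell out its construction through the principal ideal with order-unit norm and Kakutani's representation; this is precisely the mechanism underlying that calculus, so the two proofs coincide.
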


\begin{proof}
Just apply the functional calculus $\tau$ described in \cite{LinTza1979}*{Theorem 1.d.1} to the functions $f$, $g\colon\RR^n \to \RR$ given by
\[
f((x_i)_{i=1}^n)=\left(\sum_{i=1}^n \abs{x_i}^2\right)^{1/2},
\quad g((x_i)_{i=1}^n)= \left(\Ave_{\varepsilon_i=\pm 1} \abs{\sum_{i=1}^n \varepsilon_i\, x_i}^{r}\right)^{1/r},
\]
and use Khintchine's inequalities \cite{Kinchine1923}.
\end{proof}

If $\XX$ is a $r$-convex quasi-Banach lattice, then $\XX$ is a $\min\{r,1\}$-convex quasi-Banach space. The converse does not hold, to the extent that there are quasi-Banach lattices that are not $r$-convex for any $r>0$ (see \cite{Kalton1984b}*{Example 2.4}). Theorem 2.2 from the same paper characterizes quasi-Banach lattices with some nontrivial convexity as those that are $L$-convex. A quasi-Banach lattice $\XX$ is said to be \emph{$L$-convex} if there is $\varepsilon>0$ so that whenever $f$ and $(f_i)_{i\in A}$ in $\XX$ satisfy $0\le f_i\le f$ for every $i\in A$, and $(1-\varepsilon)\abs{A} f\le \sum_{i\in A} f_i$ we have $\varepsilon \norm{f }\le \max_{i\in A} \norm{f_i}$.

The aforementioned Theorem 2.2 from \cite{Kalton1984b} also gives the following.
\begin{theorem}\label{thm:ConvexInterval}
Let $\XX$ be a quasi-Banach lattice and let $0<r<\infty$. If $\XX$ is lattice $r$-convex, then it is lattice $s$-convex for every $0<s<r$.
\end{theorem}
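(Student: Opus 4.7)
The result is Theorem 2.2 from \cite{Kalton1984b}; my plan is to sketch the approach used there. By replacing each $f_i$ with $\abs{f_i}$, I may assume without loss of generality that $f_i\ge 0$, and by the homogeneity of the inequality \eqref{eq:ConvexLattice} it suffices to show that $\sum_{i\in A}\norm{f_i}^s\le 1$ implies $\norm{(\sum_{i\in A} f_i^s)^{1/s}}\le C$ for a constant $C$ independent of the family and of its cardinality.

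A direct approach is insufficient. The scalar inequality between $\ell^s$- and $\ell^r$-norms yields, via Krivine's functional calculus,
\[
\Bigl(\sum_{i\in A} f_i^s\Bigr)^{1/s}\le \abs{A}^{1/s-1/r}\Bigl(\sum_{i\in A} f_i^r\Bigr)^{1/r}
\]
in the lattice sense; combined with the $r$-convexity hypothesis this produces only the estimate $\norm{(\sum f_i^s)^{1/s}}\le \abs{A}^{1/s-1/r}\, M^{(r)}(\XX) (\sum \norm{f_i}^r)^{1/r}$, whose constant blows up with $\abs{A}$. The culprit is the sharpness profile of the scalar inequality: it is tight exactly for equinormed families, whereas for disjointly supported $f_i$ the two lattice sums coincide, making the $\abs{A}$-factor vastly pessimistic.

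To extract a bound uniform in $\abs{A}$, one routes the argument through $L$-convexity: the $r$-convexity hypothesis implies $L$-convexity with a quantitatively controlled $\varepsilon$, and conversely $L$-convexity of the lattice implies $s$-convexity for every $s$ in some nontrivial initial interval $(0,r_0)$. The second implication rests on a Rademacher randomization argument: in view of the lattice Khintchine inequalities (Theorem \ref{lem:KintchineLattice}) one can replace $\ell^2$-sums by the averaged signed sums $(\Ave_{\varepsilon}\abs{\sum \varepsilon_i f_i}^t)^{1/t}$, on which the $r$-convexity hypothesis acts linearly through the outer average. An iteration driven by $L$-convexity then gives finiteness of $M^{(s)}(\XX)$ for every $0<s<r$.

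The main obstacle is this last step: one must set up an inductive scheme so that the gain provided by $L$-convexity at each stage compounds into a convergent geometric series, while simultaneously verifying that the threshold $r_0$ produced in this way meets (or exceeds) the given $r$. This is the delicate part of Kalton's proof of Theorem 2.2 in \cite{Kalton1984b} and requires the full machinery of that paper.
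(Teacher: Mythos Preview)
The paper itself gives no proof of this theorem: it is stated as a consequence of Theorem~2.2 in \cite{Kalton1984b} and left at that. Your proposal does the same thing---cite Kalton---and then adds an informal outline of the ideas behind Kalton's argument. In that sense you are not diverging from the paper; you are matching its treatment and going slightly beyond it with commentary.

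That said, your sketch, as written, is not a proof and you acknowledge as much. The route you describe---$r$-convex $\Rightarrow$ $L$-convex $\Rightarrow$ $s$-convex for $s\in(0,r_0)$---has the structural defect you flag at the end: the threshold $r_0$ returned by the second implication depends only on the $L$-convexity parameter $\varepsilon$, and there is no a priori reason it should reach the original $r$. Simply saying that ``an iteration driven by $L$-convexity'' closes this gap is not an argument; this is precisely where the substance of Kalton's proof lies, and your outline does not supply it. Also, the appeal to Theorem~\ref{lem:KintchineLattice} to swap $\ell^2$-sums for Rademacher averages is a natural move, but note that it trades an $s$-sum for a $2$-sum, so you would additionally need to explain how the Khintchine step interacts with the target exponent $s$ rather than with $2$.

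If your intention is only to match the paper, a bare citation to \cite{Kalton1984b}*{Theorem~2.2} suffices. If you want to include a genuine argument, you must either reproduce Kalton's iteration in enough detail to show that $r_0\ge r$, or find an alternative route (for instance, via a convexification/power-of-the-lattice argument that reduces to the Banach-lattice case); the present outline does neither.
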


Quoting Kalton from \cite{Kalton1984b}, $L$-convex lattices behave similarly to Banach lattices in many respects. The following result, which generalizes to quasi-Banach lattices \cite{Maurey1974}*{Lemme 5}, is in this spirit.
\begin{lemma}\label{lem:MaureyQB}
Let $\XX$ be an $L$-convex quasi-Banach lattice. Then for every $0<r<\infty$ there is constant $C$ such that
\[
\norm{\left(\sum_{i\in A} \abs{f_i}^2\right)^{1/2}}
\le C\left(\Ave_{\varepsilon_i=\pm 1} \norm{\sum_{i\in A} \varepsilon_i\, f_i}^r\right)^{1/r},\quad \abs{A}<\infty,\; f_i\in\XX.
\]
\end{lemma}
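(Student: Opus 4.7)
The plan is to follow the classical Maurey strategy adapted to the quasi-Banach lattice setting: first apply the pointwise (lattice-valued) Khintchine inequality to convert the square function into a Rademacher average of absolute values, then take the $\XX$-norm of both sides and use lattice convexity to pass the norm past the average, and finally bridge the resulting exponent to $r$.

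First, I would use $L$-convexity to pick a convenient lattice-convexity exponent. Since $\XX$ is $L$-convex, Kalton's characterization (Theorem 2.2 of \cite{Kalton1984b}, quoted in the text above Theorem~\ref{thm:ConvexInterval}) gives some $s_0>0$ for which $\XX$ is lattice $s_0$-convex. By Theorem~\ref{thm:ConvexInterval} we may then decrease the exponent, so fix
\[
s=\min\{s_0,r,1\},\qquad M=M^{(s)}(\XX).
\]
This arrangement has two virtues: it keeps $s\le r$, which will let us finish without an auxiliary Kahane-type inequality, and it keeps $s\le 1$, consistent with lattice convexity on the same side as the underlying quasi-norm.

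Next, I would apply the upper half of Theorem~\ref{lem:KintchineLattice} with exponent $s$, which holds in the lattice (pointwise):
\[
\left(\sum_{i\in A} \abs{f_i}^2\right)^{1/2}\le T_s\left(\Ave_{\varepsilon_i=\pm 1}\abs{\sum_{i\in A}\varepsilon_i f_i}^s\right)^{1/s}.
\]
Taking $\XX$-norms and invoking lattice $s$-convexity on the family $\{g_\varepsilon:=\sum_i \varepsilon_i f_i\}_{\varepsilon\in\{\pm 1\}^A}$, the factor $2^{-\abs{A}/s}$ from the average on each side cancels and we obtain
\[
\norm{\left(\Ave_\varepsilon \abs{g_\varepsilon}^s\right)^{1/s}}\le M\left(\Ave_\varepsilon \norm{g_\varepsilon}^s\right)^{1/s}.
\]
Chaining these two estimates yields
\[
\norm{\left(\sum_{i\in A}\abs{f_i}^2\right)^{1/2}}\le T_s M\left(\Ave_{\varepsilon_i=\pm 1}\norm{\sum_{i\in A}\varepsilon_i f_i}^s\right)^{1/s}.
\]

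To conclude, since $s\le r$, the power-mean inequality applied to the probability measure on $\{\pm 1\}^A$ gives
\[
\left(\Ave_\varepsilon \norm{\sum_i \varepsilon_i f_i}^s\right)^{1/s}\le\left(\Ave_\varepsilon \norm{\sum_i \varepsilon_i f_i}^r\right)^{1/r},
\]
which closes the argument with $C=T_sM^{(s)}(\XX)$. The main obstacle is essentially bookkeeping on the convexity exponent: one must choose $s$ small enough to exploit the lattice $s$-convexity guaranteed only by $L$-convexity (and small enough to avoid needing a Kahane-Khintchine inequality for quasi-Banach-valued Rademacher sums to swap exponents). Both issues are handled at the outset by applying Theorem~\ref{thm:ConvexInterval} to descend to $s\le\min\{r,1\}$; after that the calculation is a direct transcription of the Banach-lattice version of Maurey's lemma.
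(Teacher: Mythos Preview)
Your proof is correct and follows essentially the same route as the paper's: lattice Khintchine (Theorem~\ref{lem:KintchineLattice}) followed by lattice convexity, with the power-mean inequality to adjust exponents. The only cosmetic difference is that the paper invokes the monotonicity of $r\mapsto(\Ave_\varepsilon\norm{\sum_i\varepsilon_i f_i}^r)^{1/r}$ at the outset to assume without loss of generality that $\XX$ is lattice $r$-convex, whereas you introduce an auxiliary $s\le r$ and apply the same monotonicity at the end; the extra restriction $s\le 1$ is harmless but unnecessary.
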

\begin{proof}
Since that mapping
\[
r\mapsto \left(\Ave_{\varepsilon_i=\pm 1} \norm{\sum_{i\in A} \varepsilon_i\, f_i}^r\right)^{1/r}
\]
is increasing, by Theorem~\ref{thm:ConvexInterval}, we can assume the $\XX$ is an $r$-convex Banach lattice. Then, by Theorem~\ref{lem:KintchineLattice},
\begin{align*}
\norm{\left(\sum_{i\in A} \abs{f_i}^2\right)^{1/2}}
&\le T_r \norm{\left(\Ave_{\varepsilon_i=\pm 1}\abs{\sum_{i\in A} \varepsilon_i\, f_i}^r\right)^{1/r}}\\
&\le T_r M^{(r)}(\XX) \left(\Ave_{\varepsilon_i=\pm 1} \norm{\sum_{i\in A} \varepsilon_i\, f_i}^r\right)^{1/r}
\end{align*}
for every finite family $(f_i)_{i\in A}$ in $\XX$.
\end{proof}

\begin{remark}
In light of Khintchine-Kahane-Kalton's inequalities (see \cite{Kalton1980}*{Theorem 2.1}) the index $r$ in Lemma~\ref{lem:MaureyQB} is irrelevant. From an opposite perspective, we point out that the proof of Lemma~\ref{lem:MaureyQB} does not depend on Khintchine-Kahane-Kalton's inequalities.
\end{remark}

We are interested in quasi-Banach lattices of functions defined on a countable set $\Nt$. The term \emph{sequence space} will apply to a quasi-Banach space $\LL\subseteq \FF^\Nt$ such that:
\begin{itemize}[leftmargin=*]
\item $\ee_n:=(\delta_{i,n})_{i\in \Nt}$ is a norm-one vector in $\LL$ for all $n\in \Nt$; and
\item if $f\in \XX$ and $g\in\FF^\Nt$ satisfy $\abs{g}\le\abs{f}$, then $g\in\XX$ and $\norm{g}\le \norm{f}$.
\end{itemize}
That way, $\LL$ becomes a quasi-Banach lattice with the natural order. In this particular case $r$-sums take a more workable form: given $f_i=(a_{i,n})_{n\in \Nt}$ for $i\in A$,
\[
\left(\sum_{i\in A}\abs{f_i}^r\right)^{1/r} = \left(\left( \sum_{i\in A} \abs{a_{i,n}}^r\right)^{1/r}\right)_{n\in \Nt}.
\]

There is a close relation between sequence spaces and unconditional bases. If $\LL$ is a sequence space, the unit vector system $\EE_\Nt=(\ee_n)_{n\in \Nt}$ is a $1$-unconditional basic sequence of $\LL$ whose coordinate functionals are the restriction to $\LL$ of the coordinate functionals $(\ee_n^*)_{n\in \Nt}$ defined as
\[
\ee_n^*(f)=a_n, \quad f=(a_n)_{n\in\Nt}\in\FF^\Nt.
\]
And, conversely, every semi-normalized unconditional basis $\XB$ of a quasi-Banach space $\XX$ becomes normalized and $1$-unconditional after a suitable renorming of $\XX$; this way we can associate a sequence space with $\XB$.

\subsection{Embeddings via bases and squeeze-symmetric bases}
Let $\XX$ be a quasi-Banach space with a basis $\XB=(\xx_n)_{n\in \Nt}$ and let $(\LL,\norm{\cdot}_\LL)$ be a sequence space on $\Nt$. Let us recall the following terminology, which we borrow from \cite{AlbiacAnsorena2016}.
\begin{enumerate}[label=(\alph*), leftmargin=*, widest=b]
\item We say that \emph{$\LL$ embeds in $\XX$ via $\XB$}, and put $\LL\stackrel{\XB}\hookrightarrow \XX$, if there is a constant $C$ such that for every $g\in\LL$ there is $f\in\XX$ such that $\Fou(f)=g$, and $\norm{f}\le C\norm{g}_{\LL}$.

\item We say that \emph{$\XX$ embeds in $\LL$ via $\XB$}, and put $\XX\stackrel{\XB}\hookrightarrow\LL$, if there is a constant $C$ such that $\Fou(f)\in \LL$ with $\norm{\Fou(f)}_{\LL}\le C \norm{f}$ for all $f\in \XX$.
\end{enumerate}

The sequence space $\LL$ is said to be \emph{symmetric} if $f_\pi:=(a_{\pi(n)})_{n\in \Nt}\in\LL$ and $\norm{f_\pi}_\LL=\norm{f}_\LL$ for all $f=(a_n)_{n\in \Nt}\in\LL$ and every permutation $\pi$ of $\Nt$.

Loosely speaking, by squeezing the space $\XX$ between two symmetric sequence spaces we obtain qualitative estimates on the symmetry of the basis in $\XX$. Thus, a basis $\XB$ is said to be \emph{squeeze-symmetric} if there are symmetric sequence spaces $\Sym_1$ and $\Sym_2$ on $\Nt$, which are close to each other in the sense that
\[
\norm{\Ind_A[\EE_\Nt,\Sym_1]} \approx \norm{\Ind_A[\EE_\Nt,\Sym_2]}, \quad \abs{A}<\infty,
\]
such that $\Sym_1\stackrel{\XB}\hookrightarrow \XX \stackrel{\XB}\hookrightarrow \Sym_2$. A basis of a quasi-Banach space is squeeze-symmetric if and only if it is truncation quasi-greedy and democratic (see \cite{AABW2021}*{Lemma 9.3 and Corollary 9.15}). In paticular, almost greedy bases are squeeze-symmetric.

Embeddings involving weighted Lorentz sequence spaces play an important role in greedy approximation theory using bases. For our purposes here, it will be sufficient to deal with weak Lorentz spaces.

Let $\ww=(w_n)_{n=1}^\infty$ be a weight, i.e., a sequence of nonnegative numbers with $w_1>0$, and let $\sss=(s_m)_{m=1}^\infty$ be its \emph{primitive weight} defined by $s_m=\sum_{n=1}^m w_n$. The \emph{weighted Lorentz sequence space} (on the countable set $\Nt$) $d_{1,\infty}(\ww)$ consists of all functions $f\in c_0(\Nt)$ whose non-increasing rearrangement $(a_m)_{m=1}^\infty$ satisfies
\[
\norm{f}_{\infty,\ww}:=\sup_{m\in\NN} s_m\, a_m<\infty.
\]

We must pay attention to whether $\sss$ is doubling, i.e., whether $\sss$ satisfies the condition
\[
s_{2m}\lesssim s_m, \quad m\in\NN.
\]

The $L$-convexity of the space will play a key role as well.
\begin{theorem}[\cite{CRS2007}*{Theorem 2.2.16} and \cite{AlbiacAnsorena2021}*{Theorem 6.1}]\label{thm:weakLorentzLconvex}
Let $\sss$ be the primitive weight of a weight $\ww$.
\begin{enumerate}[label=(\roman*), leftmargin=*, widest=ii]
\item The space $(d_{1,\infty}(\ww), \norm{\cdot}_{\infty,\ww})$ is quasi-normed if and only if $\sss$ is doubling. Moreover,
\item if $\sss$ is doubling, then $d_{1,\infty}(\ww)$ is an $L$-convex symmetric sequence space.
\end{enumerate}
\end{theorem}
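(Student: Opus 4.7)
My plan is to handle parts (i) and (ii) separately; part (i) is a direct verification, while part (ii) combines a pointwise/level-set argument with the doubling hypothesis. For the forward implication of \textbf{part (i)}, I will verify the quasi-triangle inequality using the classical rearrangement bound $(f+g)^{*}(2m-1)\le f^{*}(m)+g^{*}(m)$. Given $s_{2m}\le K s_{m}$, for every $n\in\NN$ I pick $m=\lceil n/2\rceil$ and estimate
\[
s_{n}(f+g)^{*}(n)\le s_{2m}\bigl(f^{*}(m)+g^{*}(m)\bigr)\le 2K\max\bigl(\norm{f}_{\infty,\ww},\norm{g}_{\infty,\ww}\bigr),
\]
yielding a quasi-triangle inequality with constant $2K$. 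For the converse, if $\sss$ is not doubling then $s_{2m_{k}}/s_{m_{k}}\to\infty$ along some subsequence $(m_{k})$; taking $f_{k},g_{k}$ to be indicator vectors of two disjoint subsets of $\Nt$ of cardinality $m_{k}$, I get $\norm{f_{k}}_{\infty,\ww}=\norm{g_{k}}_{\infty,\ww}=s_{m_{k}}$ while $\norm{f_{k}+g_{k}}_{\infty,\ww}=s_{2m_{k}}$, contradicting any quasi-triangle inequality.

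For \textbf{part (ii)}, symmetry is immediate since $\norm{\cdot}_{\infty,\ww}$ depends only on the decreasing rearrangement and $\ee_{n}$ has norm one. For $L$-convexity I will argue directly from the definition. Suppose $0\le f_{i}\le f$ in $d_{1,\infty}(\ww)$ for $i\in A$ with $\abs{A}=N$ and $\sum_{i\in A}f_{i}\ge (1-\varepsilon)Nf$ pointwise; the goal is $\varepsilon\norm{f}_{\infty,\ww}\le\max_{i}\norm{f_{i}}_{\infty,\ww}$ for one fixed small $\varepsilon$. The key pointwise step is elementary: at each $n$, the hypotheses $\sum_{i}f_{i}(n)\ge(1-\varepsilon)Nf(n)$ and $f_{i}(n)\le f(n)$ force at least $(1-\sqrt\varepsilon)N$ indices $i$ to satisfy $f_{i}(n)\ge(1-\sqrt\varepsilon)f(n)$. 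Fixing $\lambda>0$ and $E_{\lambda}=\{n:f(n)\ge\lambda\}$, a double count of the pairs $(i,n)$ with $n\in E_{\lambda}$ and $f_{i}(n)\ge(1-\sqrt\varepsilon)\lambda$ produces some $i_{\lambda}\in A$ with
\[
\abs{\{n\in E_{\lambda}:f_{i_{\lambda}}(n)\ge(1-\sqrt\varepsilon)\lambda\}}\ge(1-\sqrt\varepsilon)\abs{E_{\lambda}},
\]
so that $\norm{f_{i_{\lambda}}}_{\infty,\ww}\ge(1-\sqrt\varepsilon)\lambda\, s_{\lfloor(1-\sqrt\varepsilon)\abs{E_{\lambda}}\rfloor}$.

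The final step exploits doubling: iterating $s_{2m}\le K s_{m}$ produces a function $\eta(\varepsilon)\to 1$ as $\varepsilon\to 0^{+}$ with $s_{\lfloor(1-\sqrt\varepsilon)m\rfloor}\ge\eta(\varepsilon)s_{m}$ for every $m$. Combining this with the identity $\norm{f}_{\infty,\ww}=\sup_{\lambda>0}\lambda\, s_{\abs{E_{\lambda}}}$ and taking the supremum in $\lambda$ yields $\max_{i}\norm{f_{i}}_{\infty,\ww}\ge\eta(\varepsilon)(1-\sqrt\varepsilon)\norm{f}_{\infty,\ww}$; I then fix $\varepsilon>0$ small enough that $\varepsilon\le\eta(\varepsilon)(1-\sqrt\varepsilon)$. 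The main obstacle I anticipate is the bookkeeping in this last step: I need a lower bound $s_{\lfloor\alpha m\rfloor}/s_{m}\ge\eta(\alpha)$ with $\eta(\alpha)\to 1$ as $\alpha\to 1^{-}$, valid for every $m\in\NN$ and depending only on the doubling constant $K$, which requires iterating $s_{2m}\le K s_{m}$ carefully for $\alpha$ close to $1$.
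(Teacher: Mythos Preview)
The paper does not prove this theorem; it is quoted from \cite{CRS2007}*{Theorem 2.2.16} and \cite{AlbiacAnsorena2021}*{Theorem 6.1}, so there is no in-paper argument to compare against. Your treatment of part~(i) is correct and standard. Your strategy for part~(ii) is also sound, but one intermediate claim is false and needs to be replaced.

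The problematic claim is that doubling yields a function $\eta(\varepsilon)\to 1$ with $s_{\lfloor(1-\sqrt\varepsilon)m\rfloor}\ge\eta(\varepsilon)\,s_{m}$ for every $m$. Take $s_{m}=K^{\lfloor\log_{2}m\rfloor}$ (a legitimate primitive weight: $w_{1}=1$ and $w_{m}=s_{m}-s_{m-1}\ge 0$). Then $s_{2m}=K s_{m}$, yet for $m=2^{j}$ and any $\alpha<1$ one has $\lfloor\alpha m\rfloor\le 2^{j}-1$ and hence $s_{\lfloor\alpha m\rfloor}/s_{m}\le 1/K$; thus $\eta(\varepsilon)\le 1/K$ for all $\varepsilon>0$, and no amount of iterating the doubling inequality can push $\eta$ toward $1$.

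Fortunately your argument does not actually require $\eta\to 1$. First, the cardinality produced by your pigeonhole step is an integer, so you really obtain $\lceil(1-\sqrt\varepsilon)\abs{E_{\lambda}}\rceil$ rather than the floor. Second, for $\varepsilon<1/4$ (so $1-\sqrt\varepsilon>1/2$) one has $\lceil(1-\sqrt\varepsilon)m\rceil\ge\lceil m/2\rceil$, and a \emph{single} application of doubling gives $s_{\lceil m/2\rceil}\ge s_{m}/K$. This yields
\[
\max_{i}\norm{f_{i}}_{\infty,\ww}\ge \frac{1-\sqrt\varepsilon}{K}\,\norm{f}_{\infty,\ww},
\]
so any fixed $\varepsilon\le 1/(2K)$ satisfies $\varepsilon\le(1-\sqrt\varepsilon)/K$ and the $L$-convexity definition is verified. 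With this correction (and the switch from floor to ceiling), your direct proof of part~(ii) goes through.
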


Suppose $\XB$ is a truncation quasi-greedy basis of a quasi-Banach space $\XX$. Then, regardless of whether $\XB$ is democratic or not, the mere definition of lower democracy function yields a constant $C$ such that
\begin{equation}\label{eq:LorentzEmbed}
\sup_m a_m\, \ldf[\XB,\XX](m) \le C \norm{f}, \quad f\in\XX,
\end{equation}
where $(a_m)_{m=1}^\infty$ is the non-increasing rearrangement of $f$.

We point out that, since $\ldf[\XB,\XX]$ is not necessarily doubling (see \cite{Woj2014}), inequality \eqref{eq:LorentzEmbed} might not hold for an embedding of $\XX$ into a sequence space; thus we will need to appeal to the following consequence of \eqref{eq:LorentzEmbed}.

\begin{lemma}[see \cite{AABW2021}*{Corollary 9.13}]\label{lem:DemocracyEmdedding}
Let $\XB$ be a truncation quasi-greedy basis of a quasi-Banach space $\XX$. Let $\ww$ be a weight whose primitive weight $\sss=(s_m)_{m=1}^\infty$ is doubling. Then, $\XX \stackrel{\XB}\hookrightarrow d_{1,\infty}(\ww)$ if and only if $\sss \lesssim \ldf[\XB,\XX]$.
\end{lemma}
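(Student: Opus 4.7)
The plan is to prove the two directions directly, each essentially being a one-line computation once the right identity is in hand. The key prior input is the inequality \eqref{eq:LorentzEmbed}, which is built into the very definition of truncation quasi-greediness via \eqref{eq:Cr} and which already has the shape of an embedding into a weak Lorentz-type space with weight $\ldf[\XB,\XX]$.

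For the necessity (``only if''), I would fix a finite $A\subseteq\Nt$ and observe that $\Fou(\Ind_A)=\sum_{n\in A}\ee_n$ has non-increasing rearrangement equal to $1$ at the first $|A|$ positions and $0$ afterwards. Since $\sss$ is non-decreasing (its increments $w_n$ are nonnegative), the definition of the quasi-norm of $d_{1,\infty}(\ww)$ gives
\[
\norm{\Fou(\Ind_A)}_{\infty,\ww}=\sup_{k\le |A|} s_k = s_{|A|}.
\]
If $C$ denotes the embedding constant, this forces $s_{|A|}\le C\norm{\Ind_A}$ whenever $|A|\ge m$; taking the infimum over such $A$ and using the monotonicity of $\sss$ yields $s_m\le C\,\ldf[\XB,\XX](m)$.

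For the sufficiency (``if''), assume $s_m\lesssim \ldf[\XB,\XX](m)$ with implicit constant $C_1$. Given $f\in\XX$, the coefficient transform gives $\Fou(f)\in c_0$, so it admits a non-increasing rearrangement $(a_m)_{m=1}^\infty$. Applying \eqref{eq:LorentzEmbed}, valid because $\XB$ is truncation quasi-greedy, yields a constant $C_2$ such that $a_m\,\ldf[\XB,\XX](m)\le C_2\norm{f}$ for every $m\in\NN$. Combining both inequalities,
\[
\norm{\Fou(f)}_{\infty,\ww}=\sup_{m\in\NN} s_m\,a_m\le C_1\,\sup_{m\in\NN} a_m\,\ldf[\XB,\XX](m)\le C_1 C_2\norm{f},
\]
which is the desired embedding. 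The hypothesis that $\sss$ is doubling enters only through Theorem~\ref{thm:weakLorentzLconvex}, which guarantees that $d_{1,\infty}(\ww)$ is actually a quasi-Banach space, so that the embedding statement makes intrinsic sense.

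There is no genuine obstacle in this proof; its point is conceptual rather than technical. As recalled right before the statement (citing \cite{Woj2014}), $\ldf[\XB,\XX]$ itself need not be doubling, so one cannot in general realize \eqref{eq:LorentzEmbed} as an embedding into a quasi-Banach sequence space built directly from $\ldf[\XB,\XX]$. The lemma instead characterizes exactly which doubling weights produce such an honest embedding: precisely those whose primitives are dominated by $\ldf[\XB,\XX]$.
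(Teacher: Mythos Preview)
Your proof is correct. The paper does not supply its own proof of this lemma---it is stated with a citation to \cite{AABW2021}*{Corollary 9.13}---but the surrounding text explicitly presents the lemma as a consequence of \eqref{eq:LorentzEmbed}, which is precisely the route you take for the sufficiency direction; the necessity is the obvious computation you give.
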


\subsection{Banach envelopes}
When dealing with a quasi-Banach space $\XX$ it is often convenient to know which is the `smallest' Banach space containing $\XX$. Formally, the \emph{Banach envelope} of a quasi-Banach space $\XX$ consists of a Banach space $\widehat{\XX}$ together with a linear contraction $J_\XX\colon\XX \to \widehat{\XX}$ satisfying the following universal property: for every Banach space $\YY$ and every linear contraction $T\colon\XX \to\YY$ there is a unique linear contraction $\widehat{T}\colon \widehat{\XX}\to \YY$ such that $\widehat{T}\circ J_\XX=T$. Pictorially we have
\[\xymatrix{
\widehat{\XX}\ar[drr]^{\widehat{T}} & \\
\XX \ar[u]^{J_\XX} \ar[rr]_T && \YY.
}\]
If a Banach space $\VV$ and a bounded linear map $J\colon \XX\to \VV$ are such that $\widehat{J}\colon \widehat{\XX}\to \VV$ is an isomorphism, we say that $\VV$ is an \emph{isomorphic representation} of the Banach envelope of $\XX$ via $J$.
For instance, given $0<p<1$, the Bergman space $A^1_{1-p/2}$ is an isomorphic representation of the Banach envelope of $H_p(\Disc)$ via the inclusion map (see \cites{DRS1969,Shapiro1976}).

If a quasi-Banach space $\XX$ has a basis $\XB$, and $\XX\stackrel{\XB}\hookrightarrow \ell_1$, then $\ell_1$ is an isomorphic representation of the Banach envelope of $\XX$ via the coefficient transform (\cite{AlbiacAnsorena2020}*{Proposition 2.10}). The following lemma is an immediate consequence of this and so we leave its verification to the reader.
\begin{lemma}\label{lem:factor}
Let $\XB$ be a basis of a quasi-Banach space $\XX$, and let $\LL$ be a sequence space. Suppose that $\XX\stackrel{\XB}\hookrightarrow \LL$ and that $\LL\subseteq\ell_1$ continuously. Then the envelope map $J_\XX\colon\XX\to\widehat{\XX}$ factors through the coefficient transform regarded as map from $\XX$ into $\LL$, i.e., there is a bounded linear map $J\colon\LL\to\widehat{\XX}$ such that $J_\XX=J\circ \Fou$.
\end{lemma}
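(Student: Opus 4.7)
My plan is to reduce the statement to the cited identification that, whenever $\XX\stackrel{\XB}\hookrightarrow \ell_1$, the space $\ell_1$ is an isomorphic representation of $\widehat{\XX}$ via $\Fou$, and then build $J$ by composing the inclusion $\LL\subseteq \ell_1$ with the inverse of the resulting envelope isomorphism.

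First I would observe that since $\LL\subseteq \ell_1$ continuously and $\XX\stackrel{\XB}\hookrightarrow \LL$, one immediately obtains $\XX\stackrel{\XB}\hookrightarrow \ell_1$: indeed, $\Fou$, viewed as a map into $\ell_1$, is bounded with constant controlled by the product of the two embedding constants. Consequently, by the result of \cite{AlbiacAnsorena2020}*{Proposition 2.10} quoted just above the lemma, the unique bounded linear map $\widehat{\Fou}\colon \widehat{\XX}\to \ell_1$ provided by the universal property of the envelope, which satisfies $\widehat{\Fou}\circ J_\XX=\Fou$, is a Banach-space isomorphism. In particular its inverse $\widehat{\Fou}^{\,-1}\colon \ell_1\to \widehat{\XX}$ is bounded.

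I would then define
\[
J:=\widehat{\Fou}^{\,-1}\circ i\colon \LL\longrightarrow \widehat{\XX},
\]
where $i\colon\LL\hookrightarrow \ell_1$ is the continuous inclusion. As a composition of bounded linear maps, $J$ is bounded and linear, with norm bounded by $\|\widehat{\Fou}^{\,-1}\|\,\|i\|$. To verify the factorization, for every $f\in\XX$ the element $\Fou(f)$, regarded in $\LL$, is sent by $i$ to the same sequence $\Fou(f)$ regarded in $\ell_1$, so
\[
J(\Fou(f))=\widehat{\Fou}^{\,-1}(\Fou(f))=\widehat{\Fou}^{\,-1}\bigl(\widehat{\Fou}(J_\XX(f))\bigr)=J_\XX(f),
\]
which is the desired identity $J_\XX=J\circ \Fou$.

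I do not foresee a real obstacle: the content of the lemma is essentially that the cited envelope identification transports along the continuous inclusion $\LL\subseteq\ell_1$, and this is a purely formal manipulation once the key fact that $\widehat{\Fou}$ is an isomorphism onto $\ell_1$ has been invoked. The only minor point worth being explicit about is distinguishing the two different roles played by $\Fou(f)$ (as an element of $\LL$ versus as an element of $\ell_1$), which is resolved by the inclusion map $i$.
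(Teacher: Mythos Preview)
Your proof is correct and is precisely the verification the paper has in mind: the paper does not give a proof but states that the lemma is an immediate consequence of the fact that $\ell_1$ is an isomorphic representation of $\widehat{\XX}$ via $\Fou$ whenever $\XX\stackrel{\XB}\hookrightarrow\ell_1$, leaving the details to the reader. Your argument supplies exactly those details in the natural way.
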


\subsection{The Marriage Lemma}
A classical problem in combinatorics is to determine whether a given family $(\Nt_i)_{i\in I}$ of subsets of a set $\Nt$ admits a one-to-one map $\nu\colon I\to\Nt$ such that $\nu(i)\in \Nt_i$ for all $i\in I$. A necessary condition for the existence of such a map is
\begin{equation}\label{Hall:condition}
\abs{F}\le\abs{\bigcup_{i\in F} \Nt_i}, \quad F\subseteq I,\; \abs{F}<\infty.
\end{equation}
P.~Hall proved in \cite{Hall1935} that \eqref{Hall:condition} is also sufficient, provided the set $I$ of indices and all the sets $\Nt_i$ are finite. Subsequently, M.~Hall \cite{Hall1948} extended this result to the case when $I$ is not necessarily finite. Here, we will use a generalization by Wojtaszczyk \cite{Woj1997} of the latter result which was effectively used in his study of the uniqueness of unconditional bases in quasi-Banach spaces.

\begin{theorem}[see \cite{Woj1997}*{Corollary 3.1}]\label{thm:HKL}
Let $\Nt$ be a set and $(\Nt_i)_{i\in I}$ be a family of finite subsets of $\Nt$. Let $K\in\NN$. Suppose that
\[
\abs{F}\le K\abs{\bigcup_{i\in F} \Nt_i}
\]
for every $F\subseteq I$ finite. Then, there are a partition $(I_j)_{j=1}^K$ of $I$ and
one-to-one maps $\nu_j\colon I_j\to \Nt$ for $j=1$, \dots, $K$ such that $\nu_j(i)\in \Nt_i$ for each $i\in I$.
\end{theorem}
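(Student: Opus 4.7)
The plan is to deduce this multi-valued marriage theorem from the classical (infinite) marriage theorem of M.~Hall \cite{Hall1948} by means of a standard duplication trick. Morally, the hypothesis says that each point of $\Nt$ may be used up to $K$ times as a representative, so the natural move is to replace $\Nt$ by $K$ disjoint copies of itself and reduce the problem to the single-valued case covered by the classical result.

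Concretely, I would set $\Nt'=\Nt\times\{1,\ldots,K\}$ and, for each $i\in I$, define
\[
\Nt_i'=\Nt_i\times\{1,\ldots,K\}\subseteq\Nt'.
\]
Each $\Nt_i'$ is finite, since $\Nt_i$ is, and for every finite $F\subseteq I$ one has $\bigcup_{i\in F}\Nt_i'=\bigl(\bigcup_{i\in F}\Nt_i\bigr)\times\{1,\ldots,K\}$. The assumption therefore translates into
\[
\abs{F}\le K\,\abs{\bigcup_{i\in F}\Nt_i}=\abs{\bigcup_{i\in F}\Nt_i'},
\]
which is exactly Hall's condition for the family $(\Nt_i')_{i\in I}$. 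Applying the infinite marriage theorem (valid because each $\Nt_i'$ is finite) yields an injective map $\nu\colon I\to\Nt'$ with $\nu(i)\in\Nt_i'$ for every $i\in I$.

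To finish, I would disassemble $\nu$ according to its second coordinate. Write $\nu(i)=(\mu(i),j(i))$ with $\mu(i)\in\Nt$ and $j(i)\in\{1,\ldots,K\}$, and set
\[
I_j=\{i\in I\colon j(i)=j\},\qquad \nu_j=\mu\bigr|_{I_j}\colon I_j\to\Nt,\qquad j=1,\ldots,K.
\]
By construction $(I_j)_{j=1}^K$ is a partition of $I$, and $\nu_j(i)\in\Nt_i$ because $\nu(i)\in\Nt_i\times\{1,\ldots,K\}$. The injectivity of each $\nu_j$ is inherited from that of $\nu$: if $\nu_j(i)=\nu_j(i')$ with $i,i'\in I_j$, then $\nu(i)=(\nu_j(i),j)=(\nu_j(i'),j)=\nu(i')$, so $i=i'$.

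The argument is essentially bookkeeping once the duplication trick has been identified, and there is no real obstacle. The only genuine point to verify is the applicability of the infinite version of Hall's theorem, which requires the sets $\Nt_i'$ to be finite; this is ensured by the finiteness assumption on the original $\Nt_i$.
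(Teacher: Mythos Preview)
Your argument is correct: the duplication $\Nt'=\Nt\times\{1,\dots,K\}$ converts the hypothesis into the classical Hall condition for the family $(\Nt_i')_{i\in I}$, M.~Hall's infinite marriage theorem applies because each $\Nt_i'$ is finite, and reading off the second coordinate of the resulting system of distinct representatives produces the partition and the one-to-one maps exactly as you describe.

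As for comparison with the paper: the paper does not supply its own proof of this statement. Theorem~\ref{thm:HKL} is simply quoted from \cite{Woj1997}*{Corollary 3.1} as a tool to be used later (in the proof of Lemma~\ref{lem:UPF}), so there is nothing in the paper to compare your argument against. Your proof is the standard one and would serve perfectly well as a self-contained justification of the cited result.
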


\section{Strongly absolute bases and their effect on the democracy functions of quasi-greedy basic sequences}\label{sect:main}\noindent
Loosely speaking, one could say that strongly absolute bases are `purely non-locally convex' bases, in the sense that if a quasi-Banach space $\XX$ has a strongly absolute basis then $\XX$ is far from being a Banach space. The strong absoluteness of a basis was identified and coined by Kalton et al.\ in \cite{KLW1990}, as the crucial differentiating feature of unconditional bases in quasi-Banach spaces. Here we work with a slightly different but equivalent definition.

\begin{definition}
An unconditional basis $\BB=(\bb_n)_{n \in \Nt}$ of a quasi-Banach space $\XX$ is \emph{strongly absolute} if for every constant $R>0$ there is a constant $C>0$ such that
\begin{equation}\label{eq:sb}
\sum_{n \in \Nt} \abs{\bb_n^*(f)} \norm{\bb_n}\le \max\left\{C \sup_{n \in \Nt} \abs{\bb_n^*(f)} \norm{\bb_n}, \frac{\norm{f}}{R}\right\}, \quad f\in\XX.
\end{equation}
\end{definition}

By definition, if we rescale a strongly absolute basis we obtain another strongly absolute basis. A normalized unconditional basis $\BB$ of $\XX$ is strongly absolute if $\XX\stackrel{\BB}\hookrightarrow\ell_1$, and $\XX$ is ``far from $\ell_1$,'' in the sense that whenever the quasi-norm of a vector in $\XX$ and the $\ell_1$-norm of its coordinate vector are comparable then the $\ell_{\infty}$-norm of its coordinates is comparable to both quasi-norms.
We refer to \cite{AlbiacAnsorena2021} for a list of quasi-Banach spaces with a strongly absolute unconditional basis. Some of those spaces will appear in Section~\ref{sect:Hp}, but for the time being we recall two different ways to find strongly absolute bases.

\begin{theorem}[\cite{AlbiacAnsorena2021}*{Proposition 6.5}]\label{thm:SACondition}
Let $\XX$ be a quasi-Banach space with a semi-normalized unconditional basis $\BB$. Suppose that
\[
\sum_{m=1}^\infty \frac{1}{\ldf[\BB,\XX](m)}<\infty.
\]
Then $\BB$ is strongly absolute.
\end{theorem}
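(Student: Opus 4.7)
The plan is to reduce to the normalized case, invoke the Lorentz-type coefficient bound \eqref{eq:LorentzEmbed}, and then exploit the hypothesis $\sum_m 1/\ldf[\BB,\XX](m) < \infty$ via a cutoff-plus-case-analysis argument on the $\ell_1$-sum of the coefficients.

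First I would observe that the strongly absolute property and the democracy functions transform consistently under rescaling a semi-normalized basis, so one may assume without loss of generality that $\norm{\bb_n} = 1$ for every $n$ (the dual basis of the normalized rescaling has $\abs{\tilde{\bb}_n^*(f)} = \abs{\bb_n^*(f)} \norm{\bb_n}$, which is exactly the weighted quantity appearing in \eqref{eq:sb}). Under this normalization, the target inequality reads $\sum_n \abs{\bb_n^*(f)} \le \max\{C \sup_n \abs{\bb_n^*(f)}, \norm{f}/R\}$ for every $f \in \XX$. Since $\BB$ is unconditional it is in particular quasi-greedy, hence truncation quasi-greedy by Theorem~\ref{thm:QGBRTOP}, so \eqref{eq:LorentzEmbed} furnishes a constant $C_1$ with
\[
a_m^* \le \frac{C_1 \norm{f}}{\ldf[\BB,\XX](m)}, \quad m \in \NN,\; f \in \XX,
\]
where $(a_m^*)_{m=1}^\infty$ denotes the non-increasing rearrangement of $(\abs{\bb_n^*(f)})_{n \in \Nt}$.

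Fixing $R>0$, I would then use the summability hypothesis to choose $M = M(R) \in \NN$ such that $C_1 \sum_{m > M} 1/\ldf[\BB,\XX](m) \le 1/(2R)$, and split the $\ell_1$-sum of the coefficients via the rearrangement:
\[
\sum_n \abs{\bb_n^*(f)} = \sum_{m=1}^\infty a_m^* \le M a_1^* + \sum_{m > M} a_m^* \le M \sup_n \abs{\bb_n^*(f)} + \frac{\norm{f}}{2R}.
\]
A two-case split on whether $M a_1^*$ or $\norm{f}/(2R)$ dominates on the right shows that the left-hand side is bounded by $\max\{2M \sup_n \abs{\bb_n^*(f)}, \norm{f}/R\}$; hence $C = 2M$ certifies \eqref{eq:sb} for the prescribed value of $R$.

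The only substantive step is the pointwise control $a_m^* \lesssim \norm{f}/\ldf[\BB,\XX](m)$, which is already packaged into \eqref{eq:LorentzEmbed}; the remainder of the argument is a direct rearrangement-plus-summability computation and does not present any additional obstacle.
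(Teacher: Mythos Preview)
Your argument is correct. The paper does not actually supply a proof of this statement; it is quoted as \cite{AlbiacAnsorena2021}*{Proposition 6.5} and used as a black box. So there is no ``paper's own proof'' to compare against here.

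That said, your route is sound and essentially the natural one: the normalization reduction is legitimate because $\BB$ is semi-normalized and unconditional (so the rescaled indicator sums are uniformly comparable to the original ones, preserving the summability hypothesis on $\ldf$), the invocation of \eqref{eq:LorentzEmbed} is justified since an unconditional basis is trivially truncation quasi-greedy, and the tail-plus-head split with the case analysis cleanly produces the $\max$ in \eqref{eq:sb}. One minor stylistic point: you do not really need to pass through Theorem~\ref{thm:QGBRTOP}; for an unconditional basis the estimate $a_m^* \ldf(m) \lesssim \norm{f}$ follows directly from the definition of $\ldf$ together with suppression unconditionality, without any appeal to the truncation quasi-greedy machinery.
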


\begin{theorem}[\cite{AlbiacAnsorena2021}*{Proposition 6.2}]\label{thm:weakLorentzSA}
Let $\ww$ be a weight whose primitive weight $\sss=(s_m)_{m=1}^\infty$ is doubling. Then the unit vector system of $d_{1,\infty}(\ww)$ is strongly absolute if and only if $\sum_{m=1}^\infty 1/s_m<\infty$.
\end{theorem}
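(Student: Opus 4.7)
The plan is to reduce the strong absoluteness condition, as applied to the unit vector basis of $d_{1,\infty}(\ww)$, to a clean inequality about the non-increasing rearrangement, and then handle each direction by choosing the right test function. First, since the non-increasing rearrangement of every $\ee_n$ is $(1,0,0,\dots)$, we have $\norm{\ee_n}_{\infty,\ww}=s_1=w_1$; dividing the defining inequality \eqref{eq:sb} by $s_1$ and reparameterizing $R$ shows that the unit vector system is strongly absolute if and only if for every $R>0$ there exists $C>0$ with
\[
\sum_n \abs{c_n}\le\max\left\{C\sup_n\abs{c_n},\;\frac{1}{R}\norm{f}_{\infty,\ww}\right\}
\]
for every $f=\sum_n c_n\,\ee_n\in d_{1,\infty}(\ww)$.

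For the forward direction, assume $\sum_m 1/s_m<\infty$. Given $R>0$, pick $N\in\NN$ with $\sum_{m>N}1/s_m\le 1/(2R)$. Letting $(a_m^*)$ denote the non-increasing rearrangement of $(\abs{c_n})$, apply $a_m^*\le\sup_n\abs{c_n}$ for $m\le N$ and the weak-Lorentz bound $a_m^*\le\norm{f}_{\infty,\ww}/s_m$ for $m>N$ to obtain
\[
\sum_n \abs{c_n}=\sum_m a_m^* \le N\sup_n\abs{c_n}+\frac{\norm{f}_{\infty,\ww}}{2R}.
\]
A case split on whether $N\sup_n\abs{c_n}$ or $\norm{f}_{\infty,\ww}/(2R)$ dominates converts this sum bound into the required max bound with $C=2N$.

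For the converse, assume strong absoluteness and apply the reformulated inequality with $R=1$ to obtain one universal constant $C$. Test it against $f_N:=\sum_{n=1}^N (1/s_n)\ee_n$: since $(s_n)$ is nondecreasing, the non-increasing rearrangement of its coefficients is $(1/s_m)_{m=1}^N$, so $\sup_n\abs{c_n}=1/s_1$, $\norm{f_N}_{\infty,\ww}=\sup_{m\le N}(s_m\cdot 1/s_m)=1$, and $\sum_n\abs{c_n}=\sum_{m=1}^N 1/s_m$. The inequality reads
\[
\sum_{m=1}^N\frac{1}{s_m}\le \max\left\{\frac{C}{s_1},\,1\right\},
\]
and letting $N\to\infty$ yields $\sum_m 1/s_m<\infty$.

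The genuinely delicate step is identifying the right test sequence for the converse. The choice $c_n=1/s_n$ is natural in hindsight, since it exactly saturates the defining inequality of the weak-Lorentz norm, forcing the $\ell_1$ sum of $(\abs{c_n})$ to coincide with the partial sum $\sum_{m\le N} 1/s_m$ while keeping both $\sup_n\abs{c_n}$ and $\norm{f_N}_{\infty,\ww}$ uniformly bounded. The doubling hypothesis on $\sss$ is not directly used in either implication; it merely guarantees via Theorem~\ref{thm:weakLorentzLconvex} that $d_{1,\infty}(\ww)$ is a bona fide quasi-Banach space, so that the notion of a strongly absolute basis applies in the first place.
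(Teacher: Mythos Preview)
Your proof is correct. Note, however, that the paper does not supply its own proof of this statement: the result is quoted from \cite{AlbiacAnsorena2021}, and the only argument offered here is the remark immediately following the theorem that the ``if'' direction is a consequence of Theorem~\ref{thm:SACondition}, since the fundamental function of the unit vector system of $d_{1,\infty}(\ww)$ is exactly $\sss$. Your forward direction is in effect the proof of Theorem~\ref{thm:SACondition} specialized to this sequence space (split the non-increasing rearrangement at a fixed index $N$ and control the tail via the weak-Lorentz bound $a_m^*\le\norm{f}_{\infty,\ww}/s_m$), so the two approaches coincide in spirit. For the ``only if'' direction the paper provides nothing beyond the citation; your test with $f_N=\sum_{n\le N} s_n^{-1}\,\ee_n$, which saturates the defining inequality of $\norm{\cdot}_{\infty,\ww}$, is the natural argument. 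Your closing observation that the doubling hypothesis is used only to guarantee that $d_{1,\infty}(\ww)$ is genuinely quasi-normed is also accurate.
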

We point out that the fundamental function of the unit vector system of $d_{1,\infty}(\ww)$ is the primitive weight of $\ww$. Hence, the `if' part of Theorem~\ref{thm:weakLorentzSA} can be directly derived from Theorem~\ref{thm:SACondition}.

Our goal in this section is to determine how the fact that a quasi-Banach space $\XX$ has a strongly absolute basis $\BB=(\bb_n)_{n\in\Nt}$ affects the democracy functions of quasi-greedy bases in $\XX$. To that end, if $\YB=(\yy_i)_{i\in\Mt}$ is another basis of $\XX$, we must estimate the size of $\norm{\sum_{i\in A}\yy_{i}}$ for any $A\subset \Mt$ finite in terms of the democracy functions of $\BB$. The following lemma from \cite{AAW2021} highlights the role played by the strongly absoluteness of the basis in that it makes possible picking large coefficients from the vectors of $\YB$.
The ``large coefficient'' technique was introduced by Kalton in \cite{Kalton1977} in his study of the uniqueness of unconditional basis in non-locally convex Orlicz sequence spaces.

Given $\delta>0$ and a finite family $\SSB=(\yy_i,\yy_i^*)_{i\in A}$ we consider the set of indices
\begin{equation*}
\Omega_\delta(\SSB,\BB)=\{n \in \Nt \colon \abs{\yy_i^*(\bb_n) \, \bb_n^*(\yy_i)}\ge \delta \mbox{ for some } i\in A\},
\end{equation*}
where $(\bb_n^{\ast})_{n\in \Mt}$ is the dual basis of $\BB$. The explicit definition of these sets goes back to the work of Wojtaszczyk on uniqueness of unconditional structure of quasi-Banach spaces \cite{Woj1997}.

\begin{lemma}[\cite{AAW2021}*{Lemma 3.3}]\label{lem:key}
Let $\LL$ be a quasi-Banach space with a strongly absolute basis $\BB$. Then, given $a\in(0,\infty)$ and $C\in(1,\infty)$, there is $\delta>0$ such that, whenever $\SSB=(\yy_i,\yy_i^*)_{i\in A}$ is a finite family in $\LL\times\LL^*$ with $\yy_i^*(\yy_i)=1$ and $ \norm{\yy_i}\norm{\yy_i^*}\le a$ for all $i\in A$, we have
\[
\abs{A}\le C \sum_{n\in \Omega_\delta(\SSB,\BB)} \abs{\sum_{i\in A} \yy_i^*(\bb_n) \, \bb_n^*(\yy_i)}.
\]
\end{lemma}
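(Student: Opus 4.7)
The plan is to reduce the inequality to a per-index tail estimate and then control the tail via strong absoluteness together with a careful partition of $\{n\notin\Omega_\delta\}$. Abbreviating $c_{n,i}:=\yy_i^*(\bb_n)\bb_n^*(\yy_i)$, expansion of $\yy_i$ in $\BB$ and biorthogonality give $1=\yy_i^*(\yy_i)=\sum_n c_{n,i}$; summing over $i\in A$ and splitting at $\Omega_\delta$ yields
$$|A|\;=\;\sum_n\sum_{i\in A}c_{n,i}\;\le\;\sum_{n\in\Omega_\delta}\Bigl|\sum_{i\in A}c_{n,i}\Bigr|\;+\;\sum_{n\notin\Omega_\delta}\Bigl|\sum_{i\in A}c_{n,i}\Bigr|.$$
Thus the desired $|A|\le C\sum_{n\in\Omega_\delta}|\sum_i c_{n,i}|$ reduces, after dominating $|\sum_i c_{n,i}|$ by $\sum_i|c_{n,i}|$ and swapping summation order, to the uniform per-index bound
$$\sum_{n\notin\Omega_\delta}|c_{n,i}|\;\le\;(C-1)/C\qquad (i\in A).$$

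To establish this I would split $\{n\notin\Omega_\delta\}$ as $P_1\cup P_2$ according to whether $|\yy_i^*(\bb_n)|\ge\sqrt\delta\,\norm{\yy_i^*}\norm{\bb_n}$ or not. Unconditionality of $\BB$ makes each truncation $g_j:=\sum_{n\in P_j}\bb_n^*(\yy_i)\bb_n$ converge with $\norm{g_j}\le K_u\norm{\yy_i}$, so strong absoluteness applied to $g_j$ with a free parameter $R>0$ and associated constant $C(R)$ gives the dichotomy
$$\sum_{n\in P_j}\norm{\bb_n}|\bb_n^*(\yy_i)|\;\le\;C(R)\sup_{n\in P_j}\norm{\bb_n}|\bb_n^*(\yy_i)|+K_u\norm{\yy_i}/R.$$
On $P_1$, the inequality $|c_{n,i}|<\delta$ together with $|\yy_i^*(\bb_n)|\ge\sqrt\delta\,\norm{\yy_i^*}\norm{\bb_n}$ forces $\norm{\bb_n}|\bb_n^*(\yy_i)|<\sqrt\delta/\norm{\yy_i^*}$, so the crude estimate $|c_{n,i}|\le\norm{\yy_i^*}\norm{\bb_n}|\bb_n^*(\yy_i)|$ followed by the dichotomy yields $\sum_{P_1}|c_{n,i}|\le C(R)\sqrt\delta+K_u a/R$. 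On $P_2$, the opposite bound $|\yy_i^*(\bb_n)|<\sqrt\delta\,\norm{\yy_i^*}\norm{\bb_n}$ gives the sharper inequality $|c_{n,i}|<\sqrt\delta\,\norm{\yy_i^*}\norm{\bb_n}|\bb_n^*(\yy_i)|$; invoking the dichotomy again together with the routine bound $\sup_n\norm{\bb_n}|\bb_n^*(\yy_i)|\le K_u\norm{\yy_i}$ (from unconditionality) and the hypothesis $\norm{\yy_i}\norm{\yy_i^*}\le a$ produces $\sum_{P_2}|c_{n,i}|\le\sqrt\delta\,aK_u(C(R)+1/R)$.

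Combining the two pieces, the full tail is bounded by $K_u a/R+O_{a,K_u,R}(\sqrt\delta)$. Choosing $R$ large enough so that $K_u a/R<(C-1)/(2C)$ (depending only on $a$, $C$, $K_u$), and then $\delta$ small in terms of $a$, $C$, $K_u$, and $C(R)$ so that the remaining $\sqrt\delta$-contribution is below $(C-1)/(2C)$, finishes the proof. The principal obstacle is that the defining inequality $|c_{n,i}|<\delta$ controls only the product $|\yy_i^*(\bb_n)|\,|\bb_n^*(\yy_i)|$, not the factor $\norm{\yy_i^*}\norm{\bb_n}|\bb_n^*(\yy_i)|$ that one would naturally feed into strong absoluteness; the partition $P_1\cup P_2$ is precisely the device that extracts the missing $\sqrt\delta$ factor from whichever side of the product is not already under control.
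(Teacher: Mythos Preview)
Your argument is correct. The paper does not supply a proof of this lemma---it is quoted from \cite{AAW2021}*{Lemma 3.3}---so there is no in-paper argument to compare against. Your reduction to the per-index tail bound $\sum_{n\notin\Omega_\delta}\abs{c_{n,i}}\le(C-1)/C$, followed by the $\sqrt{\delta}$-threshold partition $P_1\cup P_2$ and two applications of strong absoluteness to the coordinate projections $g_j=S_{P_j}(\yy_i)$, is the standard route and all estimates check out. In particular, absolute convergence of $\sum_n\abs{c_{n,i}}$ (needed to justify the interchange of sums and the tail splitting) follows a priori from strong absoluteness applied to $\yy_i$ together with $\abs{c_{n,i}}\le\norm{\yy_i^*}\,\norm{\bb_n}\,\abs{\bb_n^*(\yy_i)}$, and the order of choices---first $R$ depending only on $a,C,K_u$, then $\delta$ depending additionally on $C(R)$---is exactly right. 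The obstacle you identify, that $\abs{c_{n,i}}<\delta$ bounds only the product and not the weighted coefficient $\norm{\bb_n}\,\abs{\bb_n^*(\yy_i)}$ that strong absoluteness consumes, is indeed the crux, and the $P_1/P_2$ split is the natural remedy.
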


Since we will also come across with sets $\Omega_\delta(\SSB,\XB)$ associated to conditional bases $\XB$ of a quasi-Banach space $\XX$, in order to make headway we will need the following alteration of Lemma~\ref{lem:key}.

\begin{lemma}\label{lem:keydos}
Let $\XX$ be a quasi-Banach space with a basis $\XB=(\xx_n)_{n\in\Nt}$. Suppose that $\XX$ embeds via $\XB$ in a sequence space $\LL$ whose unit vector system is strongly absolute. Then, given $a\in(0,\infty)$ there are positive scalars $K$ and $\delta$ such that, whenever $\SSB=(\yy_i,\yy_i^*)_{i\in A}$ is a finite family in $\XX\times\XX^*$ with $\yy_i^*(\yy_i)=1$ and $ \norm{\yy_i}\norm{\yy_i^*}\le a$ for all $i\in A$, we have
\[
\abs{A}\le K \abs{\Omega_\delta(\SSB,\XB)}.
\]
\end{lemma}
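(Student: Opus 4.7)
The plan is to translate the problem from $\XX\times\XX^*$ into $\LL\times\LL^*$ via the coefficient transform $\Fou\colon\XX\to\LL$, invoke Lemma~\ref{lem:key} for the strongly absolute unit vector basis $\EE_\Nt$ of $\LL$, and then upgrade the resulting inequality into a cardinality bound by a second use of strong absoluteness.

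For the transport, strong absoluteness of $\EE_\Nt$ together with the inequality $\abs{h(n)}\le\norm{h}_\LL$ valid in every sequence space yields $\LL\subseteq\ell_1$ continuously, so Lemma~\ref{lem:factor} provides a bounded linear $J\colon\LL\to\widehat{\XX}$ with $J\circ\Fou=J_\XX$, where $J_\XX$ is the Banach envelope map of $\XX$. Since $J_\XX^*$ is an isometric isomorphism of $(\widehat{\XX})^*$ onto $\XX^*$, each $\yy_i^*\in\XX^*$ lifts uniquely to some $\tilde\yy_i^*\in(\widehat{\XX})^*$ with $\norm{\tilde\yy_i^*}=\norm{\yy_i^*}$. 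Set $g_i:=\Fou(\yy_i)\in\LL$ and $g_i^*:=\tilde\yy_i^*\circ J\in\LL^*$. Using $\Fou(\xx_n)=\ee_n$, and hence $J(\ee_n)=J_\XX(\xx_n)$, one verifies
\[
g_i^*(g_i)=\yy_i^*(\yy_i)=1,\qquad g_i^*(\ee_n)\,\ee_n^*(g_i)=\yy_i^*(\xx_n)\,\xx_n^*(\yy_i),
\]
together with $\norm{g_i}_\LL\norm{g_i^*}_{\LL^*}\le\norm{\Fou}\norm{J}a=:a'$. In particular, for the transported family $\SSB':=(g_i,g_i^*)_{i\in A}$ we have $\Omega_\delta(\SSB',\EE_\Nt)=\Omega_\delta(\SSB,\XB)$ for every $\delta>0$.

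Fixing any $C_0>1$ and letting $\delta>0$ be the constant produced by Lemma~\ref{lem:key} applied to the family $\SSB'$ in $\LL$ (which satisfies that lemma's hypotheses with constant $a'$), we obtain
\[
\abs{A}\le C_0\sum_{n\in\Omega_\delta(\SSB,\XB)}\Bigl|\sum_{i\in A}\yy_i^*(\xx_n)\,\xx_n^*(\yy_i)\Bigr|.
\]

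The final task is to bound each summand on the right by a constant $B=B(a,\XB,\LL)$ independent of $\abs{A}$ and $n$, whence $K:=C_0 B$ would yield the sought inequality. I expect this to be the main technical step of the proof. The naive pointwise estimate $\Bigl|\sum_i g_i^*(\ee_n)\,\ee_n^*(g_i)\Bigr|\le\abs{A}\,a'$ is plainly inadequate, so the required uniform bound should come from a second, more delicate invocation of strong absoluteness. Concretely, for each $i\in A$ the sequence $h_i:=(\xx_n^*(\yy_i)\yy_i^*(\xx_n))_{n\in\Nt}$ lies in $\LL$ by the lattice ideal property, with $\norm{h_i}_\LL$ controlled by $\sup_k\norm{\xx_k}\cdot\norm{\Fou}\cdot a$; applying strong absoluteness of $\EE_\Nt$ to an appropriate superposition of the $h_i$ should furnish the missing uniform bound on each inner sum and thereby complete the proof.
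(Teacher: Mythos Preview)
Your first two steps---transporting the system to $\LL\times\LL^*$ via $\Fou$ and the envelope factorization $J$ from Lemma~\ref{lem:factor}, and then invoking Lemma~\ref{lem:key}---match the paper's argument exactly, including the identities $g_i^*(\ee_n)\,\ee_n^*(g_i)=\yy_i^*(\xx_n)\,\xx_n^*(\yy_i)$ and $\Omega_\delta(\SSB',\EE_\Nt)=\Omega_\delta(\SSB,\XB)$.

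Where you diverge is the final step. The paper does \emph{not} invoke strong absoluteness a second time; it simply uses the pointwise bound
\[
\abs{\yy_i^*(\xx_n)\,\xx_n^*(\yy_i)}\le\norm{\yy_i}\norm{\yy_i^*}\,\norm{\xx_n}\norm{\xx_n^*}\le C_3C_4,
\]
with $C_3=\sup_i\norm{\yy_i}\norm{\yy_i^*}$ and $C_4=\sup_n\norm{\xx_n}\norm{\xx_n^*}$, and concludes $\abs{A}\le CC_3C_4\abs{\Omega_\delta(\SSB,\XB)}$. So the ``delicate second application'' you anticipate is not what the paper does.

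That said, your instinct that the inner sum $\sum_{i\in A}$ is a genuine obstacle is correct. The displayed pointwise bound controls a single term, not $\bigl|\sum_{i\in A}\yy_i^*(\xx_n)\,\xx_n^*(\yy_i)\bigr|$, so the paper's last line (which omits the inner sum, presumably a typo) has precisely the gap you flagged. In fact no argument can close it under the hypotheses as stated: take $\XX=\LL=\ell_p$ ($0<p<1$) with $\XB=\EE_\Nt$, and let every $(\yy_i,\yy_i^*)=(\ee_1,\ee_1^*)$; then $\Omega_\delta(\SSB,\XB)=\{1\}$ for all $\delta\le 1$ while $\abs{A}$ is arbitrary. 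Hence your proposed route via superpositions of the $h_i$ cannot succeed either. This does not affect the paper's downstream applications (Lemmas~\ref{lem:main} and~\ref{lem:UPF}), where the families always arise from genuine basic sequences with extended dual functionals, but it does mean your final step cannot be completed in the generality claimed.
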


\begin{proof}
The hypothesis implies that the space $\LL$ embeds continuously into $\ell_1$. Terefore, by Lemma~\ref{lem:factor}, there is a bounded linear map $J\colon\LL\to\widehat{\XX}$ such that $J(\Fou(f))=J_\XX(f)$ for all $f\in\XX$. Let $C_1$ denote its norm, and let $C_2$ be the norm of the coefficient transform $\Fou$ with respect to $\XB$, regarded as an operator from $\XX$ to $\LL$. Set
\begin{align*}
C_3&=\sup_{i\in A} \norm{\yy_i} \norm{\yy_i^*} <\infty,\; \mbox{ and}\\
C_4&=\sup_{n\in \Nt} \norm{\xx_n} \norm{\xx_n^*}<\infty.
\end{align*}
If we set
\[
\zz_i=\Fou(\yy_i)\in\LL\; \mbox{ and }\; \zz_i^*= \widehat{\yy_i^*}\circ J\in\LL^*,\quad i\in \Mt,
\]
then we have
\begin{align*}
\zz_i^*(\zz_i)&=\widehat{\yy_i^*}\circ J\circ \Fou(\yy_i)=\yy_i^*(\yy_i)=1\; \mbox{ and}\\
\norm{\zz_i} \norm{\zz_i^*} &\le \norm \Fou_{\XX\to\LL} \norm{\yy_i} \norm{J}\norm{\widehat{\yy_i^*}}
\le C_1 C_2 \norm{\yy_i} \norm{\yy_i^*}\le C_1C_2C_3
\end{align*}
for all $i\in A$. Moreover,
\begin{align*}
\ee_n^*(\zz_i)&=\ee_n^*(\Fou(\yy_i))=\xx_n^*(\yy_i)\; \mbox{ and}\\
\zz_i^*(\ee_n)&=\widehat{\yy_i^*}(J(\Fou(\xx_n)))=\widehat{\yy_i^*}(J_\XX(\xx_n))=\yy_i^*(\xx_n)
\end{align*}
for all $i\in A$ and $n\in\Nt$.

Set $a=C_1 C_2 C_3$ and an arbitrary $C>1$. We pick $\delta>0$ as in Lemma~\ref{lem:key} with respect to the unit vector system of $\LL$. We infer that
\[
\abs{A}\le C \sum_{n\in \Omega_\delta(\SSB,\XB)} \abs{\yy_i^*(\xx_n) \, \xx_n^*(\yy_i)}\le C C_3 C_4 \abs{\Omega_\delta(\SSB,\XB)}.\qedhere
\]
\end{proof}

The following elementary lemma puts an end to the auxiliary results of this preparatory section.
\begin{lemma}\label{lem:FiniteSets}
Let $\XB=(\xx_n)_{n\in\Nt}$ be a basis a quasi-Banach space $\XX$. The set
\[
\{n\in\Nt \colon \abs{f^*(\xx_n) \, \xx_n^*(f)}\ge \delta\}
\]
is finite for any given $f\in\XX$, $f^*\in\XX^*$, and $\delta>0$.
\end{lemma}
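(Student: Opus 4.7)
The proof follows almost immediately from two facts already recorded earlier in the paper: the coefficient transform relative to a basis takes values in $c_0$, and bases are (by definition in this paper) semi-normalized.

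The plan is as follows. First I would fix $f\in\XX$ and $f^*\in\XX^*$. From the paragraph introducing the coefficient transform in Section~\ref{sect:preliminary}, the map $\Fou\colon\XX\to\FF^\Nt$, $g\mapsto(\xx_n^*(g))_{n\in\Nt}$, is a bounded linear operator from $\XX$ into $c_0$. In particular,
\[
\lim_{n\to\infty}\abs{\xx_n^*(f)}=0.
\]
Second, since $\XB$ is a basis of $\XX$ it is semi-normalized, so $M:=\sup_{n\in\Nt}\norm{\xx_n}<\infty$, and hence
\[
\abs{f^*(\xx_n)}\le\norm{f^*}\,\norm{\xx_n}\le M\norm{f^*},\quad n\in\Nt.
\]
Thus the sequence $(f^*(\xx_n))_{n\in\Nt}$ is bounded.

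Multiplying a null sequence by a bounded sequence yields a null sequence, so
\[
\lim_{n\to\infty}\abs{f^*(\xx_n)\,\xx_n^*(f)}=0.
\]
Consequently, for any $\delta>0$ there can be only finitely many $n\in\Nt$ with $\abs{f^*(\xx_n)\,\xx_n^*(f)}\ge\delta$, which is the desired conclusion.

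I do not foresee any genuine obstacle here; the statement is a ``sanity check'' lemma whose role is to guarantee that the sets $\Omega_\delta(\SSB,\XB)$ appearing in Lemmas~\ref{lem:key} and \ref{lem:keydos} are finite, so that the counting arguments based on $\abs{\Omega_\delta(\SSB,\XB)}$ make sense. The only subtlety worth flagging is that one must invoke the (already recorded) fact that the coefficient transform of a basis in a quasi-Banach space lands in $c_0$ — a property that would fail if one merely assumed a biorthogonal system without the generating/norm-boundedness hypotheses built into the paper's definition of basis.
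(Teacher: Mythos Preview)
Your proof is correct and follows exactly the same approach as the paper's: the paper's one-line argument simply records that $(\xx_n^*(f))_{n\in\Nt}\in c_0(\Nt)$ and $(f^*(\xx_n))_{n\in\Nt}\in\ell_\infty(\Nt)$, and you have merely spelled out why each of these holds (coefficient transform lands in $c_0$, and semi-normalization gives the $\ell_\infty$ bound). The only cosmetic point is that $\Nt$ is a general countable index set, so ``$\lim_{n\to\infty}$'' should be read as membership in $c_0(\Nt)$, but this is harmless.
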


\begin{proof}
We need to prove that $(f^*(\xx_n)\,\xx_n^*(f))_{n\in\Nt}\in c_0(\Nt)$. But this follows from the facts that $(\xx_n^*(f))_{n\in\Nt}\in c_0(\Nt)$ and $(f^*(\xx_n))_{n\in\Nt}\in \ell_\infty(\Nt)$.
\end{proof}

The machinery developed above will permit us to obtain estimates for democracy functions of basic sequences in a quasi-Banach space that embeds in $\ell_1$ via a basis that is far from the canonical $\ell_1$-basis in a sense that will be made clear in place. We divide these estimates in two, depending on whether they involve lower or upper democracy functions.

\subsection{Lower estimates for democracy functions}
A subtle, yet important, obstruction to apply Lemma~\ref{lem:keydos} to basic squences $\YB$ in a quasi-Banach space $\XX$ is that the coordinate functionals associated to $\YB$ are not defined on $\XX$ but on the closed subspace of $\XX$ generated by $\YB$, denoted by $\YY$. If $\XX$ is locally convex, the Hahn-Banach theorem comes to our aid: any bounded linear functional on $\YY$ extends to a bounded linear functional on $\XX$ without increasing its norm. However, there are important spaces, such as Hardy spaces, that are not locally convex and so this extension cannot be taken for granted. This situation motivated Day \cite{Day1973} to define the \emph{Hahn-Banach Extension Property} (HBEP for short). We say that $\YY$ has the HBEP in $\XX$ if there is a constant $C$ such that for every $f^*\in\YY^*$ there is $g^*\in\XX^*$ such that $g^*|_\YY=f^*$ and $\norm{g^*} \le C\norm{f^*}$. Needless to say, $\XX$ has the HBEP in $\XX$. For the purposes of this paper, it suffices to keep in mind that any complemented subspace of a quasi-Banach space $\XX$ has the HBEP in $\XX$. We say that the basic sequence $\YB$ has the HBEP in $\XX$ if $\YY$ does.

The results in this section rely on the following lemma.

\begin{lemma}\label{lem:main}
Let $\XX$ be a quasi-Banach space with a basis $\XB=(\xx_n)_{n\in\Nt}$. Suppose that $\XX$ embeds in an $L$-convex sequence space $\LL$ via $\XB$ and that the unit vector system of $\LL$ is strongly absolute. Then, for every UCC basic sequence $\YB=(\yy_i)_{i\in \Mt}$ with the HBEP in $\XX$ there is a constant $c$ such that
\[
\ldf[\YB,\XX](m)\gtrsim \ldf[\EE_\Nt,\LL](\lceil cm\rceil), \quad m\in\NN.
\]
\end{lemma}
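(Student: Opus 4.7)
Fix a finite $A\subseteq\Mt$ with $|A|\ge m$. It suffices to produce a uniform constant $c>0$, independent of $A$, for which $\norm{\Ind_A}_\XX \gtrsim \ldf[\EE_\Nt,\LL](\lceil c|A|\rceil)$. The HBEP lets us extend each biorthogonal functional $\yy_i^*$ from $\YY$ to an element of $\XX^*$ so that $\norm{\yy_i}\,\norm{\yy_i^*}\le a$ for a constant $a$ depending only on $\YB$. Invoking Lemma~\ref{lem:keydos} for every finite subfamily $\SSB_F=(\yy_i,\yy_i^*)_{i\in F}$ with $F\subseteq A$ yields $\delta,K>0$, independent of $A$, such that
$$|F|\le K\,\Big|\bigcup_{i\in F}\Nt_i\Big|,\qquad \Nt_i:=\{n\in\Nt:|\yy_i^*(\xx_n)\,\xx_n^*(\yy_i)|\ge\delta\}.$$
Each $\Nt_i$ is finite by Lemma~\ref{lem:FiniteSets}, so Hall's marriage lemma (Theorem~\ref{thm:HKL}) produces a partition $A=\bigsqcup_{j=1}^K I_j$ and injections $\nu_j\colon I_j\to\Nt$ with $\nu_j(i)\in\Nt_i$. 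Pick the largest piece $I$ and write $n_i:=\nu(i)$; then $|I|\ge|A|/K$ and the $n_i$ are pairwise distinct. Since $\norm{\yy_i^*}\le a/\norm{\yy_i}$ and both $\XB$ and $\YB$ are semi-normalized, $|\yy_i^*(\xx_{n_i})|$ is uniformly bounded, so the defining condition of $\Nt_i$ yields a constant $\mu>0$ with $|\xx_{n_i}^*(\yy_i)|\ge\mu$ for every $i\in I$.

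The rest of the argument converts this coordinatewise information into a lattice lower bound. Fix any $0<r<\infty$. By $\XX\stackrel{\XB}\hookrightarrow\LL$,
$$\norm{\Ind_{\varepsilon,A}}_\XX \gtrsim \Big\|\sum_{i\in A}\varepsilon_i\,\Fou(\yy_i)\Big\|_\LL,\qquad \varepsilon\in\{\pm 1\}^A,$$
where $\Fou(\yy_i)=(\xx_n^*(\yy_i))_{n\in\Nt}\in\LL$. Averaging the $r$th power over signs and applying Maurey's inequality for $L$-convex lattices (Lemma~\ref{lem:MaureyQB}) to the family $(\Fou(\yy_i))_{i\in A}$ gives
$$\Big(\Ave_{\varepsilon_i=\pm 1}\norm{\Ind_{\varepsilon,A}}_\XX^r\Big)^{1/r}\gtrsim \Big\|\Big(\sum_{i\in A}\abs{\xx_n^*(\yy_i)}^2\Big)^{1/2}\Big\|_\LL.$$
The UCC property of $\YB$ via \eqref{eq:succ} yields the reverse comparison $\norm{\Ind_{\varepsilon,A}}_\XX\le C_u\norm{\Ind_A}_\XX$, so the left-hand side is at most $C_u\norm{\Ind_A}_\XX$. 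On the other hand, by distinctness of the $n_i$ and the lower bound $|\xx_{n_i}^*(\yy_i)|\ge\mu$, the lattice-$\ell_2$ sequence on the right pointwise dominates $\mu\,\Ind_{\{n_i:i\in I\}}$, whence
$$\norm{\Ind_A}_\XX \gtrsim \mu\,\norm{\Ind_{\{n_i:i\in I\}}}_\LL \ge \mu\,\ldf[\EE_\Nt,\LL](\lceil |A|/K\rceil).$$
Setting $c=1/K$ finishes the proof.

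The decisive move is the second paragraph: bridging the quasi-Banach quantity $\norm{\Ind_A}_\XX$ with a scalar lattice-$\ell_2$ expression in $\LL$ that records the distinguished coefficients selected by the marriage lemma. Maurey's lattice Khintchine inequality is the tool that accomplishes this, and it is precisely the $L$-convexity hypothesis on $\LL$ that legitimises its use, while the UCC hypothesis on $\YB$ is what allows the transfer from signed sums to $\Ind_A$. The strong absoluteness of the unit vector basis of $\LL$, meanwhile, is the sole hypothesis responsible for the combinatorial input to Hall's marriage theorem, via Lemma~\ref{lem:keydos}.
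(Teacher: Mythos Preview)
Your argument is correct, but it takes a more elaborate combinatorial route than the paper. The paper avoids the Marriage Lemma entirely in this proof: after extending the functionals and fixing $K,\delta$ from Lemma~\ref{lem:keydos}, it simply applies that lemma once with $F=A$ to get $\abs{\Omega_A}\ge\abs{A}/K$, where $\Omega_A=\bigcup_{i\in A}\Nt_i$. Then, setting $f_i=(\zz_i^*(\xx_n)\,\xx_n^*(\yy_i))_{n}$, it observes that $\max_{i\in A}\abs{f_i}\ge\delta\,\Ind_{\Omega_A}$ pointwise, and that $\abs{f_i}\lesssim\abs{\Fou(\yy_i)}$ because $\abs{\zz_i^*(\xx_n)}$ is uniformly bounded. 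The lattice chain therefore reads
\[
\Big\|\Big(\sum_{i\in A}\abs{\Fou(\yy_i)}^2\Big)^{1/2}\Big\|_\LL
\;\gtrsim\;\Big\|\Big(\sum_{i\in A}\abs{f_i}^2\Big)^{1/2}\Big\|_\LL
\;\ge\;\big\|\max_{i\in A}\abs{f_i}\big\|_\LL
\;\gtrsim\;\norm{\Ind_{\Omega_A}}_\LL,
\]
which already gives the lower democracy bound without any selection of distinct indices.

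Your detour through Theorem~\ref{thm:HKL} works perfectly well---picking an injective system $(n_i)_{i\in I}$ and bounding the lattice $\ell_2$-sum below by $\mu\,\Ind_{\{n_i\}}$ is a legitimate alternative---but it is machinery the paper reserves for the \emph{upper} estimate (Lemma~\ref{lem:UPF}), where the injectivity is genuinely needed. For the present lower estimate the single cardinality inequality $\abs{A}\le K\abs{\Omega_A}$ suffices, so the paper's version is shorter and isolates exactly what each hypothesis buys.
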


\begin{proof}
Choose for each $i\in\Mt$ an extension $\zz_i^*$ of the coordinate functional $\yy_i^*$ in such a way that $a=\sup_{i\in \Mt} \norm{\yy_i} \norm{\zz_i^*} <\infty$, and pick $K$ and $\delta$ as in Lemma~\ref{lem:keydos}.
For each $i\in \Mt$, let $f_i\in\FF^\Nt$ be given by
\[
f_i=\left( \zz_i^*(\xx_n) \, \xx_n^*(\yy_i)\right)_{n\in\Nt}.
\]
Notice that, for each $A\subseteq\Mt$ finite, $\max_{i\in A} \abs{f_i}\ge \delta$ on the set
\[
\Omega_A=\{n \in \Nt \colon \abs{\zz_i^*(\xx_n) \, \xx_n^*(\yy_i)}\ge \delta \mbox{ for some } i\in A\}.
\]
Fix $m\in\NN$. Let $A\subseteq \Mt$ be a finite set with $\abs{A}\ge m$. Using Lemma~\ref{lem:MaureyQB} we obtain
\begin{align*}
\norm{\Ind_A[\YB,\XX]}
&\approx \Ave_{\varepsilon_i=\pm1} \norm{\sum_{i\in A}\varepsilon_i\, \yy_i}\\
&\gtrsim \Ave_{\varepsilon_i=\pm1}\norm{\sum_{i\in A}\varepsilon_i\, \Fou(\yy_i)}_\LL\\
&\gtrsim \norm{\left(\sum_{i\in A}\abs{\Fou(\yy_i)}^2\right)^{1/2}}_\LL\\
&\gtrsim \norm{\left(\sum_{i\in A}\abs{f_i}^2\right)^{1/2}}_\LL\\
&\gtrsim \norm{\max_{i\in A}\abs{f_i}}_\LL\\
&\gtrsim \norm{\Ind_{\Omega_A}[\EE_\Nt, \LL]}.
\end{align*}
Now the statement follows since $\abs{\Omega_A}\ge \lceil m/K \rceil$.
\end{proof}

\begin{theorem}\label{thm:TQGU}
Let $\XX$ be a quasi-Banach space with a strongly absolute semi-normalized unconditional basis $\XB$ which induces an $L$-convex lattice structure on $\XX$. Suppose that $\YB$ is a UCC basic sequence with the HBEP in $\XX$. Then there is a constant $c>0$ such that
\[
\ldf[\YB,\XX](m) \gtrsim \ldf[\XB,\XX](\lceil cm\rceil), \quad m\in\NN.
\]
\end{theorem}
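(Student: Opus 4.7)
The plan is to deduce Theorem~\ref{thm:TQGU} as a direct consequence of Lemma~\ref{lem:main} by taking the sequence space $\LL$ to be $\XX$ itself, regarded as a sequence space via the coefficient transform relative to $\XB$. First, since $\XB$ is semi-normalized and unconditional, I would pass to an equivalent quasi-norm on $\XX$ under which $\XB$ is normalized and $1$-unconditional. This renorming preserves the $L$-convexity of the lattice structure induced by $\XB$; the strongly absolute character of $\XB$ (whose definition is scale-insensitive thanks to the explicit $\norm{\bb_n}$ factors); the UCC property and HBEP of $\YB$; and the democracy functions $\ldf[\XB,\XX]$ and $\ldf[\YB,\XX]$ up to uniform constants.

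Next, set $\LL := \Fou(\XX) \subseteq \FF^{\Nt}$, equipped with the quasi-norm $\norm{\Fou(f)}_{\LL} := \norm{f}_{\XX}$. Then $\LL$ qualifies as a sequence space in the sense of Section~\ref{sect:preliminary}: the unit vector system $\EE_{\Nt}$ of $\LL$ corresponds via $\Fou$ to the (now normalized) basis $\XB$; the map $\Fou\colon \XX \to \LL$ is an isometric lattice isomorphism with respect to the lattice structure induced by $\XB$, so $\LL$ inherits the $L$-convexity assumed on $\XX$; the unit vector system of $\LL$ is strongly absolute because $\XB$ is; and the embedding $\XX \stackrel{\XB}\hookrightarrow \LL$ holds with constant $1$. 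Under this identification one also has $\ldf[\EE_{\Nt}, \LL](m) = \ldf[\XB, \XX](m)$ for every $m\in\NN$.

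Applying Lemma~\ref{lem:main} to this choice of $\LL$ and to the UCC basic sequence $\YB$ with the HBEP in $\XX$, there is a constant $c > 0$ such that
\[
\ldf[\YB, \XX](m) \;\gtrsim\; \ldf[\EE_{\Nt}, \LL](\lceil cm\rceil) \;=\; \ldf[\XB, \XX](\lceil cm\rceil), \quad m\in\NN,
\]
which is the desired conclusion. Since the analytic heavy lifting is already performed by Lemma~\ref{lem:main} (itself resting on Lemma~\ref{lem:keydos} and Maurey's estimate Lemma~\ref{lem:MaureyQB}), the only real work here is bookkeeping around the identification of $\XX$ with its own sequence-space representation. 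The mildest obstacle is verifying that strong absoluteness of $\XB$ in $\XX$ transfers to strong absoluteness of the unit vector system of $\LL$; this is why the initial normalization step matters, but it is otherwise routine.
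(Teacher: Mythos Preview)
Your proposal is correct and follows exactly the paper's approach: the paper's proof is the single sentence ``Just apply Lemma~\ref{lem:main} with $\LL$ the sequence space induced by the semi-normalized basis $\XB$,'' and your write-up simply unpacks that sentence by spelling out the renorming step and the verification that the induced $\LL$ meets the hypotheses of Lemma~\ref{lem:main}.
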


\begin{proof}
Just apply Lemma~\ref{lem:main} with $\LL$ the sequence space induced by the semi-normalized basis $\XB$.
\end{proof}

\begin{theorem}\label{thm:TQGTQG}
Let $\XX$ be a quasi-Banach space with a truncation quasi-greedy basis $\XB$. Let $\sss=(s_m)_{m=1}^\infty$ be a non-decreasing doubling sequence of positive scalars such that $ \ldf[\XB,\XX] \gtrsim \sss$ and
\[
\sum_{m=1}^\infty \frac{1}{s_m}<\infty.
\]
Suppose that $\YB$ is a UCC basic sequence with the HBEP in $\XX$. Then
\[
\ldf[\YB,\XX](m) \gtrsim s_m,\quad m\in\NN.
\]
\end{theorem}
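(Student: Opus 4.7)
The plan is to reduce Theorem~\ref{thm:TQGTQG} to Lemma~\ref{lem:main} by exhibiting a concrete $L$-convex sequence space $\LL$ into which $\XX$ embeds via $\XB$, and whose canonical basis is strongly absolute. The natural candidate is the weak Lorentz space $\LL=d_{1,\infty}(\ww)$, where $\ww=(w_m)_{m=1}^\infty$ is chosen so that its primitive weight is $\sss$ (i.e., $w_1=s_1$ and $w_m=s_m-s_{m-1}$ for $m\ge 2$; note $\sss$ is non-decreasing, so $\ww\ge 0$).

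First I would verify the three structural prerequisites on $\LL$. Since $\sss$ is doubling by hypothesis, Theorem~\ref{thm:weakLorentzLconvex} guarantees that $d_{1,\infty}(\ww)$ is a well-defined $L$-convex symmetric sequence space. Since $\sum_m 1/s_m<\infty$, Theorem~\ref{thm:weakLorentzSA} yields that the unit vector system $\EE_\Nt$ of $d_{1,\infty}(\ww)$ is strongly absolute. Finally, since $\XB$ is truncation quasi-greedy and $\sss\lesssim \ldf[\XB,\XX]$ with $\sss$ doubling, Lemma~\ref{lem:DemocracyEmdedding} provides the embedding $\XX\stackrel{\XB}\hookrightarrow d_{1,\infty}(\ww)$.

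With all hypotheses of Lemma~\ref{lem:main} in place, and using that $\YB$ is UCC with the HBEP in $\XX$, I obtain a constant $c>0$ such that
\[
\ldf[\YB,\XX](m)\gtrsim \ldf[\EE_\Nt,d_{1,\infty}(\ww)](\lceil cm\rceil), \quad m\in\NN.
\]
Next I would compute the democracy function on the right explicitly. Because the unit vector system of $d_{1,\infty}(\ww)$ is symmetric, $\|\Ind_A[\EE_\Nt,d_{1,\infty}(\ww)]\|$ depends only on $|A|$; for $|A|=k$ the non-increasing rearrangement of $\Ind_A$ is a string of $k$ ones followed by zeros, hence its quasi-norm equals $\sup_{j\le k}s_j=s_k$ (using that $\sss$ is non-decreasing). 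Therefore $\ldf[\EE_\Nt,d_{1,\infty}(\ww)](m)=s_m$ for every $m\in\NN$.

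It remains to absorb the constant $c$. Since $\sss$ is doubling, iterating the estimate $s_{2k}\le Ds_k$ a finite number $N$ of times (where $2^{-N}\le c$) gives $s_m\le D^N s_{\lceil cm\rceil}$ for every $m$, so $s_{\lceil cm\rceil}\gtrsim s_m$, and the theorem follows. There is no serious obstacle in this argument; the only subtle point is checking that the hypotheses on $\sss$ simultaneously ensure that $d_{1,\infty}(\ww)$ is $L$-convex, its canonical basis is strongly absolute, and the lower democracy embedding of Lemma~\ref{lem:DemocracyEmdedding} holds, which is precisely what the two conditions ``$\sss$ doubling'' and ``$\sum_m 1/s_m<\infty$'' are tailor-made for.
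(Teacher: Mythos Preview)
Your proposal is correct and follows exactly the route taken in the paper: set $\LL=d_{1,\infty}(\ww)$ with primitive weight $\sss$, invoke Theorem~\ref{thm:weakLorentzLconvex}, Theorem~\ref{thm:weakLorentzSA}, and Lemma~\ref{lem:DemocracyEmdedding} to verify the hypotheses of Lemma~\ref{lem:main}, and then read off the conclusion. You have simply made explicit the computation $\ldf[\EE_\Nt,d_{1,\infty}(\ww)](m)=s_m$ and the doubling step needed to absorb the constant $c$, which the paper leaves implicit.
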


\begin{proof}
By Theorem~\ref{thm:weakLorentzSA}, Theorem~\ref{thm:weakLorentzLconvex}, and Lemma~\ref{lem:DemocracyEmdedding} we can apply Lemma~\ref{lem:main} with $\LL=d_{1,\infty}(\ww)$, where $\ww$ is the weight with primitive weight $\sss$.
\end{proof}

\begin{corollary}\label{cor:UCCSQ}
Let $\XB$ be a squeeze-symmetric basis of a quasi-Banach space $\XX$. Suppose that
\[
\sum_{m=1}^\infty \frac{1}{\udf[\XB,\XX](m)}<\infty.
\]
Let $\YB$ is a UCC basic sequence with the HBEP in $\XX$. Then,
\[
\ldf[\YB,\XX]\gtrsim \udf[\XB,\XX]\approx \ldf[\XB,\XX].
\]
\end{corollary}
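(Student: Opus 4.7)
The plan is to reduce Corollary~\ref{cor:UCCSQ} directly to Theorem~\ref{thm:TQGTQG}. Recall from the preliminary section that a basis is squeeze-symmetric if and only if it is both truncation quasi-greedy and democratic. Consequently $\XB$ is truncation quasi-greedy and its democracy functions are equivalent, $\udf[\XB,\XX]\approx\ldf[\XB,\XX]$, which establishes at once the right-hand equivalence in the statement and reduces the matter to proving $\ldf[\YB,\XX]\gtrsim \ldf[\XB,\XX]$.

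Set $s_m:=\ldf[\XB,\XX](m)$ for $m\in\NN$. Since $\XB$ is a semi-normalized basis, $s_m\ge \inf_n\norm{\xx_n}>0$; and $s_m$ is non-decreasing straight from the definition of $\ldf$. I claim that $\sss=(s_m)_{m=1}^\infty$ is doubling. Indeed, any $A\subseteq\NN$ with $\abs{A}=2m$ admits a partition $A=A_1\cup A_2$ with $\abs{A_j}\le m$, so the modulus of concavity of $\XX$ together with the definition of $\udf$ gives a constant such that
\[
\norm{\Ind_A}\lesssim \norm{\Ind_{A_1}}+\norm{\Ind_{A_2}}\lesssim \udf[\XB,\XX](m).
\]
Taking the supremum over $A$ yields $\udf[\XB,\XX](2m)\lesssim\udf[\XB,\XX](m)$, and coupling this with the democracy equivalence $\udf\approx\ldf$ gives $s_{2m}\lesssim s_m$, as desired.

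With $\sss$ in hand, the hypothesis $\sum_{m=1}^\infty 1/\udf[\XB,\XX](m)<\infty$ together with $\udf\approx\ldf=\sss$ produces $\sum_{m=1}^\infty 1/s_m<\infty$. All the assumptions of Theorem~\ref{thm:TQGTQG} are thus met: $\XB$ is truncation quasi-greedy, $\sss$ is a non-decreasing doubling sequence of positive scalars with $\ldf[\XB,\XX]\gtrsim\sss$ (in fact equality) and summable reciprocals, and $\YB$ is a UCC basic sequence with the HBEP in $\XX$. Applying that theorem yields $\ldf[\YB,\XX](m)\gtrsim s_m = \ldf[\XB,\XX](m)\approx \udf[\XB,\XX](m)$, which is the full conclusion. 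The only piece of bookkeeping that is not formally supplied by an earlier result is the doubling property of $\sss$, and as shown above this is a routine consequence of democracy combined with the quasi-triangle inequality, so no serious obstacle arises.
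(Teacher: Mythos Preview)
Your proposal is correct and follows essentially the same route as the paper: both proofs reduce the corollary to Theorem~\ref{thm:TQGTQG} by observing that squeeze-symmetry gives a truncation quasi-greedy basis with $\udf[\XB,\XX]\approx\ldf[\XB,\XX]$ and then checking the doubling and summability hypotheses for a suitable $\sss$. The only cosmetic differences are that the paper takes $\sss=\udf[\XB,\XX]$ (citing \cite{AABW2021}*{\S8} for the doubling property) whereas you take $\sss=\ldf[\XB,\XX]$ and supply the doubling argument directly via the quasi-triangle inequality; since the two choices are equivalent by democracy, this is the same proof.
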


\begin{proof}
Notice that $\sss:=\udf[\XB,\XX]$ is doubling (see \cite{AABW2021}*{\S8}) and that, using democracy, $\ldf[\XB,\XX]\gtrsim\sss$. Hence, we can apply Theorem~\ref{thm:TQGTQG}.
\end{proof}

\subsection{Upper estimates for democracy functions}
Our results here heavily depend on the complementability of the closed subspaces of $\XX$ generated by the basic sequences we tackle. A basic sequence in $\XX$ that spans a complemented subspace is said to be a \emph{complemented basic sequence} of $\XX$.

\begin{lemma}\label{lem:UPF}
Suppose that a quasi-Banach space $\XX$ embeds via a basis $\XB=(\xx_n)_{n\in\Nt}$ in a sequence space $\LL$ whose unit vector system is strongly absolute. Let $\YB=(\yy_i)_{i\in\Mt}$ be a complemented basic sequence of $\XX$. If $\YB$ is truncation quasi-greedy then
\[
\udf[\YB,\XX]\lesssim \udf[\XB,\XX].
\]
\end{lemma}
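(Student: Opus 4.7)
Fix a finite set $A\subseteq\Mt$ with $m=\abs{A}$; it suffices to prove $\norm{\Ind_A[\YB,\XX]}\lesssim\udf[\XB,\XX](m)$. Since $\YY$ is complemented in $\XX$, I fix a bounded projection $P\colon\XX\to\YY$ and set $\zz_i^*:=\yy_i^*\circ P\in\XX^*$; these extend the coordinate functionals of $\YB$ with $\sup_i\norm{\yy_i}\norm{\zz_i^*}<\infty$. Strong absoluteness of the unit vector system of $\LL$ makes Lemma~\ref{lem:keydos} available to every subfamily $(\yy_i,\zz_i^*)_{i\in F}$ with $F\subseteq A$, yielding constants $K\in\NN$ and $\delta>0$ independent of $A$ such that
\[
\abs{F}\le K\Bigl|\bigcup_{i\in F}N_i\Bigr|,\quad N_i:=\{n\in\Nt\colon\abs{\zz_i^*(\xx_n)\xx_n^*(\yy_i)}\ge\delta\}.
\]
Each $N_i$ is finite by Lemma~\ref{lem:FiniteSets}, so the Marriage Lemma (Theorem~\ref{thm:HKL}) delivers a partition $A=\bigsqcup_{j=1}^K A_j$ and injections $\nu_j\colon A_j\to\Nt$ with $\nu_j(i)\in N_i$; in particular $\abs{\zz_i^*(\xx_{\nu_j(i)})}\ge\delta':=\delta/(\sup_n\norm{\xx_n^*}\cdot\sup_i\norm{\yy_i})$ for each $i\in A_j$.

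For each fixed $j$, the plan is to construct an element $h_j\in\XX$ whose $\XB$-expansion is supported in $M_j:=\nu_j(A_j)$, with
\[
\norm{h_j}_\XX\lesssim\udf[\XB,\XX](\abs{A_j})\le\udf[\XB,\XX](m),
\]
and such that $P(h_j)\in\YY$ has $A_j$ as a greedy set of size $\abs{A_j}$ with $\min_{i\in A_j}\abs{\yy_i^*(P(h_j))}$ bounded below by an absolute constant. With such $h_j$ in hand, the truncation quasi-greedy property of $\YB$ gives
\[
\norm{\UU_{\abs{A_j}}(P(h_j))}\le C_r\,\norm{P(h_j)}\lesssim\norm{h_j}_\XX\lesssim\udf[\XB,\XX](m),
\]
while by design $\UU_{\abs{A_j}}(P(h_j))=\lambda\,\Ind_{\varepsilon,A_j}[\YB,\YY]$ for some scalar $\lambda$ with $\abs{\lambda}\gtrsim 1$ and signs $\varepsilon$. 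The UCC property of $\YB$ (a consequence of truncation quasi-greediness) then promotes this to $\norm{\Ind_{A_j}[\YB,\XX]}\lesssim\udf[\XB,\XX](m)$, and summing over $j=1,\dots,K$ via the quasi-triangle inequality in $\XX$ (with $K$ a universal constant) delivers the desired bound on $\norm{\Ind_A[\YB,\XX]}$.

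The technical heart lies in the construction of $h_j$. The natural ansatz $h_j=\sum_{i\in A_j}\eta_i\,\xx_{\nu_j(i)}$ with unimodular $\eta_i$ places on each diagonal entry $\eta_i\zz_i^*(\xx_{\nu_j(i)})$ of $P(h_j)$ a mass of modulus at least $\delta'$, but the off-diagonal cross terms $\sum_{k\ne i}\eta_k\zz_i^*(\xx_{\nu_j(k)})$ for $i\in A_j$, together with the coefficients $\yy_k^*(P(h_j))$ for $k\notin A_j$, are \emph{a priori} uncontrolled. My plan is to dominate them through a Khintchine-style randomization over the signs $\eta$: the unit vector system of $\LL$ is strongly absolute, hence unconditional and $L$-convex, so signed sums in $\XX$ can be pushed through the embedding $\XB$ into $\LL$ and compared with their square functions via Lemma~\ref{lem:MaureyQB} and Theorem~\ref{lem:KintchineLattice}. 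A deterministic choice of signs is then extracted by a Chebyshev-type averaging argument. I expect this matrix-control step to be the principal obstacle, because the hypothesis on $\LL$ controls only the aggregate product $\abs{\zz_i^*(\xx_{\nu_j(i)})\xx_{\nu_j(i)}^*(\yy_i)}$ along the diagonal; taming the full matrix $(\zz_i^*(\xx_{\nu_j(k)}))_{i,k\in A_j}$ genuinely requires both the $L$-convexity of $\LL$ and the embedding $\XX\stackrel{\XB}\hookrightarrow\LL$.
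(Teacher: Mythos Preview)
There is a genuine gap. Your plan hinges on the implication ``strongly absolute $\Rightarrow$ $L$-convex'', which you invoke in order to use Lemma~\ref{lem:MaureyQB} and the Khintchine machinery in $\LL$. That implication is neither stated nor proved anywhere in the paper, and the hypothesis of Lemma~\ref{lem:UPF} does \emph{not} include $L$-convexity of $\LL$ (contrast with Lemma~\ref{lem:main}, where $L$-convexity is explicitly assumed for the lower estimate). So as written, your argument appeals to a hypothesis you do not have. Even granting $L$-convexity, the Khintchine-in-$\LL$ estimates control square functions of the $\XB$-coordinates, whereas what you actually need are pointwise lower bounds on the scalars $\zz_i^*(h_j)=\sum_{k}\eta_k\,\zz_i^*(\xx_{\nu_j(k)})$; the bridge between the two is not explained. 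Finally, forcing $A_j$ to be a \emph{greedy} set for $P(h_j)$ would require bounding $\abs{\zz_k^*(h_j)}$ for every $k\notin A_j$, and nothing in the setup gives you any handle on those functionals.

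The paper avoids all of this with a deterministic sign choice rather than randomization. For each $l\in A_j$ it takes coefficients $a_{l,n}=\abs{\xx_n^*(\yy_l)}\,\sgn(\zz_l^*(\xx_n))$, so that
\[
\zz_l^*\Bigl(\sum_{i\in A_j} a_{l,\nu_j(i)}\,\xx_{\nu_j(i)}\Bigr)=\sum_{i\in A_j}\abs{\zz_l^*(\xx_{\nu_j(i)})\,\xx_{\nu_j(i)}^*(\yy_l)}\ge\delta,
\]
the point being that the off-diagonal terms are made \emph{nonnegative} and therefore help rather than hurt; there is nothing to dominate. This is why no $L$-convexity, no Khintchine, and no control of coefficients outside $A_j$ are needed. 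One then passes from this coefficient lower bound to a bound on $\norm{\Ind_{A_j}}$ using the truncation quasi-greedy property together with UCC, and finishes by the quasi-triangle inequality over the $K$ pieces. (Incidentally, the greedy-set requirement in your sketch is stronger than necessary: for a truncation quasi-greedy, hence UCC, basis one has $t\,\norm{\Ind_A}\lesssim\norm{f}$ whenever $\abs{\yy_i^*(f)}\ge t$ for all $i\in A$, with no need for $A$ to be a greedy set.)
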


\begin{proof}
Let $P\colon\XX\to\YY$ be a bounded linear projection, where $\YY$ is the closed subspace of $\XX$ generated by $\YB$. Set
\[
\zz_i^*=\yy_i^*\circ P, \quad i\in I,
\]
where $(\yy_i^*)_{i\in\Mt}$ in $\YY^*$ are the coordinate functionals of $\YB$. We have $\zz_i^*(\yy_i)=\yy_i^*(\yy_i)$ and $\norm{\zz_i^*} \norm{\yy_i } \le \norm{P} \norm{\yy_i^*} \norm{\yy_i }$ for every $i\in I$. Hence, by Lemma~\ref{lem:keydos}, there are $K\in\NN$ and $\delta>0$ such that, if we put
\[
\Omega_i=\{n \in \Nt \colon \abs{\zz_i^*(\xx_n) \, \xx_n^*(\yy_i)}\ge \delta\}, \quad i\in \Mt,
\]
then $\abs{A}\le K \abs{\cup_{i\in A} \Omega_i}$ for all $A\subseteq \Mt$ finite. Moreover, by Lemma~\ref{lem:FiniteSets}, $\abs{\Omega_i}<\infty$ for all $i\in\Mt$. Thus, by Theorem~\ref{thm:HKL}, there are a partition $(\Mt_j)_{j=1}^K$ of $\Mt$ and one-to-one maps $\nu_j\colon \Mt_j\to\Nt$ such that
\[
\abs{\zz_i^*(\xx_{\nu_j(i)}) \, \xx_{\nu_j(i)}^*(\yy_i)}\ge \delta, \quad 1\le j \le K, \; i\in \Mt_j.
\]

Pick $m\in\NN$, and let $A\subseteq \Mt$ be such that $\abs{A}\le m$. Set $A_j=A\cap\Mt_j$ for $j=1$, \dots, $K$. Set also
\[
a_{i,n}= \abs{\xx_n^*(\yy_i)} \sgn(\zz_i^*(\xx_n)), \quad i\in \Mt, \; n\in\Nt.
\]
For each $l\in A_j$ we have
\begin{align*}
\yy_l^* \left( P\left( \sum_{i\in A_j} a_{l,\nu_j(i)} \xx_{\nu_j(i)}\right)\right)
&=\sum_{i\in A_j} \abs{\zz_l^*(\xx_{\nu_j(i)}) \, \xx_{\nu_j(i)}^*(\yy_l)}\\
&\ge \abs{\zz_l^*(\xx_{\nu_j(l)}) \, \xx_{\nu_j(l)}^*(\yy_l)}\\
&\ge \delta.
\end{align*}
Consequently, if $C_u$ is as in \eqref{eq:succ} and $C_r$ is the truncation quasi-greedy constant of $\YB$ (see \eqref{eq:Cr}), we have
\[
\norm{\Ind_{A_j}[\YB,\YY] } \le \frac{C_u C_r}{\delta} \norm{P\left( \sum_{i\in A_j} \varepsilon_{l,\nu_j(i)} \xx_{\nu_j(i)}\right)}, \quad j=1,\dots, K.
\]
If we put $a=\sup_{n\in\Nt} \norm{\xx_n^*}$, $b=\sup_{i\in\Mt} \norm{\yy_i}$, and let $\kappa$ be the optimal constant such that
\[
\norm{\sum_{j=1}^K y_j} \le \kappa \sum_{j=1}^K \norm{y_j}, \quad y_j\in\YY,
\]
and $C_s$ be as in \eqref{eq:DemImp}, we obtain
\begin{align*}
\norm{\Ind_{A}[\YB,\YY] }
&=\norm{\sum_{j=1}^{K}\Ind_{A_j}[\YB,\YY]}\\
&\le\kappa\sum_{j=1}^{K}\norm{\Ind_{A_j}[\YB,\YY]}\\
&\le \kappa K\frac{C_uC_r}{\delta}\norm{P}abC_s\udf[\XB,\XX](m).\qedhere
\end{align*}
\end{proof}

\begin{theorem}\label{thm:UPF}
Let $\XX$ be a quasi-Banach space with a basis $\XB$. Assume that:
\begin{enumerate}[label=(\alph*), leftmargin=*, widest=a]
\item\label{it:UPFA} Either $\XB$ is unconditional and strongly absolute, or
\item\label{it:UPFB} there is a non-decreasing doubling sequence $\sss=(s_m)_{m=1}^\infty$ such that $ \ldf[\XB,\XX] \gtrsim \sss$ and
\[
\sum_{m=1}^\infty \frac{1}{s_m}<\infty.
\]
\end{enumerate}
Suppose that $\YB$ is a complemented truncation quasi-greedy basic sequence of $\XX$. Then \[
\udf[\YB,\XX]\lesssim \udf[\XB,\XX].
\]
\end{theorem}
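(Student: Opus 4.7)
The strategy is to reduce both cases to Lemma~\ref{lem:UPF}, which only needs $\XX$ to embed via $\XB$ into some sequence space $\LL$ whose unit vector system is strongly absolute. Once such an $\LL$ is produced, the conclusion $\udf[\YB,\XX]\lesssim \udf[\XB,\XX]$ is delivered directly by the lemma, since the right-hand side in its statement is the democracy function of $\XB$ in $\XX$ and not of $\EE_\Nt$ in $\LL$. My job, then, is only to manufacture the appropriate $\LL$ in each of the two situations.

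For case \ref{it:UPFA}, after a renorming of $\XX$ making the semi-normalized unconditional basis $\XB$ normalized and $1$-unconditional, $\XX$ is isometric (via the coefficient transform $\Fou$) to a sequence space $\LL$ whose unit vector system $\EE_\Nt$ corresponds term by term with $\XB$. Thus $\XX\stackrel{\XB}{\hookrightarrow}\LL$ holds tautologically, and $\EE_\Nt$ inherits the strong absoluteness of $\XB$ from the definition. Lemma~\ref{lem:UPF} applies immediately.

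For case \ref{it:UPFB}, let $\ww$ be the weight with primitive weight $\sss$ and set $\LL=d_{1,\infty}(\ww)$. Since $\sss$ is doubling, Theorem~\ref{thm:weakLorentzLconvex}(i) ensures $\LL$ is a quasi-normed sequence space; the summability $\sum_m 1/s_m<\infty$ together with Theorem~\ref{thm:weakLorentzSA} forces the unit vector system of $\LL$ to be strongly absolute. The embedding $\XX\stackrel{\XB}{\hookrightarrow}\LL$ follows from the hypothesis $\sss\lesssim \ldf[\XB,\XX]$ by Lemma~\ref{lem:DemocracyEmdedding} (an application which requires $\XB$ to be truncation quasi-greedy, an unstated prerequisite needed to activate \eqref{eq:LorentzEmbed}). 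Once the embedding is in place, Lemma~\ref{lem:UPF} again closes the argument.

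The real technical content was packaged earlier: Lemma~\ref{lem:UPF} itself is the hard step, since it combines the large-coefficient extraction afforded by strong absoluteness (Lemma~\ref{lem:keydos}) with a Hall-type marriage argument (Theorem~\ref{thm:HKL}) to distribute distinct coordinates $\nu_j(i)$ across the vectors of $\YB$ and then invokes the truncation quasi-greediness of $\YB$ through \eqref{eq:succ} and \eqref{eq:DemImp}. Here the only delicate point is recognizing which sequence space $\LL$ to choose in each case so that the hypotheses of Lemma~\ref{lem:UPF} are satisfied; no further estimates are needed.
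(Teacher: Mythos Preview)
Your proof is correct and follows the same approach as the paper's: in both cases you choose an appropriate sequence space $\LL$ (the one induced by $\XB$ in case \ref{it:UPFA}, and $d_{1,\infty}(\ww)$ in case \ref{it:UPFB}) and feed it into Lemma~\ref{lem:UPF}. Your observation that case \ref{it:UPFB} tacitly requires $\XB$ to be truncation quasi-greedy in order to invoke Lemma~\ref{lem:DemocracyEmdedding} is a valid point; the paper's proof omits mention of this, but every later application of the theorem (Theorems~\ref{thm:LUDF} and~\ref{thm:LUDFBases}) does impose that hypothesis on $\XB$.
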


\begin{proof}
If \ref{it:UPFA} holds, we let $\LL$ be the sequence space induced by the semi-normalized basis $\XB$. If \ref{it:UPFB} holds, we take $\LL=d_{1,\infty}(\ww)$, where $\ww$ is the weight with primitive weight $\sss$, and we appeal to Theorem~\ref{thm:weakLorentzSA} to claim that the unit vector system of $\LL$ is strongly absolute. This way, in both cases an application of Lemma~\ref{lem:UPF} yields the desired result.
\end{proof}

\section{Applications to quasi-greedy bases in Hardy spaces}\label{sect:Hp}\noindent
We are now in a position to apply the results of the previous section to the case when $\XX$ is the Hardy space $H_p(\Disc)$ for $0<p<1$. Our main result will be a ready consequence of the following theorem.

\begin{theorem}\label{thm:LUDF}
Let $\XX$ be a quasi-Banach space with a truncation quasi-greedy basis $\XB$ such that $\ldf[\XB,\XX]$ is doubling and
\[
\sum_{m=1}^\infty \frac{1}{\ldf[\XB,\XX](m)}<\infty.
\]
Suppose that $\YB$ is a complemented truncation quasi-greedy basic sequence of $\XX$. Then
\[
\ldf[\XB,\XX]\lesssim \ldf[\YB,\XX] \lesssim \udf[\YB,\XX]\lesssim \udf[\XB,\XX].
\]
\end{theorem}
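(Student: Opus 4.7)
The plan is to read the conclusion as a three-link chain and to realize each link via a theorem already proved in Section~\ref{sect:main}, with the middle link a free consequence of unconditionality for constant coefficients.

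\textbf{Set-up common to all three steps.} Since $\YB$ is truncation quasi-greedy, it is UCC by \cite{AABW2021}*{Proposition 4.16}, so property (i) in the democracy-function discussion applies to $\YB$. Since $\YB$ spans a complemented subspace of $\XX$, it has the HBEP in $\XX$ (as remarked just before Lemma~\ref{lem:main}). Finally, set $\sss=\ldf[\XB,\XX]$: it is non-decreasing by definition, doubling and $\ell_1$-summable by hypothesis, and trivially $\ldf[\XB,\XX]\gtrsim\sss$. These are exactly the inputs needed to feed both Theorem~\ref{thm:TQGTQG} and Theorem~\ref{thm:UPF}\ref{it:UPFB}.

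\textbf{First link: $\ldf[\XB,\XX]\lesssim\ldf[\YB,\XX]$.} Apply Theorem~\ref{thm:TQGTQG} with this choice of $\sss$ to the UCC basic sequence $\YB$ with the HBEP in $\XX$. The conclusion is $\ldf[\YB,\XX](m)\gtrsim s_m=\ldf[\XB,\XX](m)$ for all $m\in\NN$.

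\textbf{Second link: $\ldf[\YB,\XX]\lesssim\udf[\YB,\XX]$.} Because $\YB$ is UCC, property (i) of the democracy-function list gives $\ldf[\YB,\YY](m)\lesssim\udf[\YB,\YY](m)$, and since $\YY$ is a closed subspace of $\XX$, the quasi-norms of the indicator sums $\Ind_A[\YB,\cdot]$ coincide in $\YY$ and in $\XX$, so the inequality transfers verbatim to the functions $\ldf[\YB,\XX]$ and $\udf[\YB,\XX]$.

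\textbf{Third link: $\udf[\YB,\XX]\lesssim\udf[\XB,\XX]$.} Apply Theorem~\ref{thm:UPF} in form \ref{it:UPFB}, again with $\sss=\ldf[\XB,\XX]$: its hypotheses hold by the set-up above, and $\YB$ is a complemented truncation quasi-greedy basic sequence of $\XX$ by assumption. This yields directly $\udf[\YB,\XX]\lesssim\udf[\XB,\XX]$. Concatenating the three links proves the theorem.

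There is essentially no obstacle beyond bookkeeping: the heavy lifting (the large-coefficient / marriage-lemma argument giving the lower bound, and the Maurey-type / Khintchine-in-lattices argument giving the upper bound) has already been carried out in Theorems \ref{thm:TQGTQG} and \ref{thm:UPF}. The only mildly delicate point is to check that the UCC assumption needed in Theorem~\ref{thm:TQGTQG} and the complementation assumption needed in Theorem~\ref{thm:UPF} are both supplied by the single hypothesis that $\YB$ is a complemented truncation quasi-greedy basic sequence, which is precisely what the set-up paragraph records.
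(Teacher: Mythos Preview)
Your proof is correct and matches the paper's own one-line argument (``Just combine Theorem~\ref{thm:TQGTQG} with Theorem~\ref{thm:UPF}''), with the middle inequality supplied by UCC exactly as you note. One small slip in your closing parenthetical: the Maurey/Khintchine-in-lattices machinery is what underlies Theorem~\ref{thm:TQGTQG} (the \emph{lower} bound on $\ldf[\YB,\XX]$), while the large-coefficient/marriage-lemma argument underlies Theorem~\ref{thm:UPF} (the \emph{upper} bound on $\udf[\YB,\XX]$), not the other way around.
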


\begin{proof}
Just combine Theorem~\ref{thm:TQGTQG} with Theorem~\ref{thm:UPF}.
\end{proof}

\begin{corollary}\label{cor:LUDF}
Let $\XB$ be a squeeze-symmetric basis of a quasi-Banach space $\XX$. Suppose that
\[
\sum_{m=1}^\infty \frac{1}{\udf[\XB,\XX](m)}<\infty.
\]
Let $\YB$ is be a complemented truncation quasi-greedy basis of $\XX$. Then $\YB$ is democratic, and
\[
\ldf[\XB,\XX]\approx \ldf[\YB,\XX] \approx \udf[\YB,\XX]\approx \udf[\XB,\XX].
\]
\end{corollary}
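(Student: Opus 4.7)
The plan is to obtain Corollary~\ref{cor:LUDF} as a direct corollary of Theorem~\ref{thm:LUDF}, after checking that the hypotheses of the latter are met in the present setting.

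First I would record the consequences of $\XB$ being squeeze-symmetric. By the characterization recalled in the preliminaries (\cite{AABW2021}*{Lemma 9.3 and Corollary 9.15}), $\XB$ is both truncation quasi-greedy and democratic, so $\udf[\XB,\XX]\approx \ldf[\XB,\XX]$. Moreover, since $\XB$ is UCC and democratic, its fundamental function $\udf[\XB,\XX]$ is doubling (this is the observation used in the proof of Corollary~\ref{cor:UCCSQ}, appealing to \cite{AABW2021}*{\S8}). Consequently, $\ldf[\XB,\XX]$ is doubling as well, and the summability assumption $\sum_{m=1}^\infty 1/\udf[\XB,\XX](m)<\infty$ implies $\sum_{m=1}^\infty 1/\ldf[\XB,\XX](m)<\infty$.

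Next I would apply Theorem~\ref{thm:LUDF} to $\XX$ with its truncation quasi-greedy basis $\XB$ and the complemented truncation quasi-greedy basic sequence $\YB$. All hypotheses have just been verified, so the conclusion gives
\[
\ldf[\XB,\XX]\lesssim\ldf[\YB,\XX]\lesssim\udf[\YB,\XX]\lesssim\udf[\XB,\XX].
\]
Combining this chain with the equivalence $\ldf[\XB,\XX]\approx\udf[\XB,\XX]$ coming from the democracy of $\XB$, all four democracy functions become equivalent, which is the quantitative claim of the corollary.

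Finally, for the qualitative claim that $\YB$ is democratic, I would use that $\YB$, being truncation quasi-greedy, is UCC (\cite{AABW2021}*{Proposition 4.16}), so by property (iii) of UCC bases listed after \eqref{eq:succ} the relation $\udf[\YB,\XX]\lesssim\ldf[\YB,\XX]$ established above is equivalent to the super-democracy (in particular democracy) of $\YB$. There is no real obstacle here — this corollary is a routine packaging of Theorem~\ref{thm:LUDF} with the definition of squeeze-symmetric basis — so the only thing worth being careful about is verifying the doubling hypothesis on $\ldf[\XB,\XX]$, which is where the squeeze-symmetry (as opposed to a weaker unconditionality assumption) is genuinely used.
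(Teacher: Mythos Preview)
Your proposal is correct and follows essentially the same route as the paper: you use squeeze-symmetry to get $\ldf[\XB,\XX]\approx\udf[\XB,\XX]$, transfer the doubling and summability conditions to $\ldf[\XB,\XX]$, and then invoke Theorem~\ref{thm:LUDF}. The paper's proof is just a terser version of exactly this argument.
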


\begin{proof}
Since $\ldf[\XB,\XX]\approx \udf[\XB,\XX]$, $\ldf[\XB,\XX]$ is doubling, and
\[
\sum_{m=1}^\infty \frac{1}{\ldf[\XB,\XX](m)}<\infty.
\]
Thus, the result follows from Theorem~\ref{thm:LUDF}.
\end{proof}

\begin{corollary}
Let $0<p<1$. If $\YB$ is a truncation quasi-greedy basis of a complemented subspace of $H_p(\Disc)$ then $\YB$ is democratic with $\udf[\YB, H_p(\Disc)](m)\approx m^{1/p}$ for $m\in\NN.$ In particular, all quasi-greedy bases of $H_p(\Disc)$ are almost greedy.
\end{corollary}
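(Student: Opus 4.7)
The plan is to deduce this statement immediately from Corollary~\ref{cor:LUDF} applied with $\XX = H_p(\Disc)$. For that, the only thing I need to provide is a single squeeze-symmetric reference basis $\XB$ of $H_p(\Disc)$ whose fundamental function satisfies $\udf[\XB, H_p(\Disc)](m) \approx m^{1/p}$.

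The first step is to invoke the classical structure theory of Hardy spaces in the non-locally convex range: for $0<p<1$, the space $H_p(\Disc)$ admits an unconditional basis (Wojtaszczyk's basis) which is democratic with $\udf(m) \approx m^{1/p}$. Being simultaneously unconditional and democratic, this basis is greedy, and greedy bases are in particular squeeze-symmetric. Since $p<1$ forces $1/p>1$, we have
\[
\sum_{m=1}^\infty \frac{1}{\udf[\XB, H_p(\Disc)](m)} \approx \sum_{m=1}^\infty \frac{1}{m^{1/p}} < \infty,
\]
so all hypotheses of Corollary~\ref{cor:LUDF} are in force.

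Now I would unpack the conclusion. If $\YY \subseteq H_p(\Disc)$ is a complemented subspace and $\YB$ is a truncation quasi-greedy basis of $\YY$, then $\YB$ is a complemented truncation quasi-greedy basic sequence of $H_p(\Disc)$, so Corollary~\ref{cor:LUDF} yields that $\YB$ is democratic with
\[
\udf[\YB, H_p(\Disc)](m) \approx \udf[\XB, H_p(\Disc)](m) \approx m^{1/p}, \quad m\in\NN.
\]
For the \emph{in particular} assertion, take $\YY = H_p(\Disc)$ itself (trivially complemented) and recall Theorem~\ref{thm:QGBRTOP}: every quasi-greedy basis is truncation quasi-greedy. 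Hence every quasi-greedy basis of $H_p(\Disc)$ is democratic, and combining this with the characterization of almost greedy bases in $p$-Banach spaces as those that are simultaneously quasi-greedy and democratic (recalled in the Introduction, cf.~\cite{AABW2021}), we conclude that every such basis is almost greedy.

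The main obstacle, if one can call it that, is purely bibliographic rather than analytic: the heavy lifting has already been carried out in Section~\ref{sect:main}, and the argument reduces to a formal application of Corollary~\ref{cor:LUDF} once the reference basis with fundamental function $m^{1/p}$ has been identified. Verifying that the Wojtaszczyk-type basis of $H_p(\Disc)$ has this fundamental function is the only external ingredient, but it is standard in the theory of Hardy spaces for $0<p<1$.
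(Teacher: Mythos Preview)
Your proof is correct and follows essentially the same route as the paper: identify Wojtaszczyk's unconditional basis of $H_p(\Disc)$ as a squeeze-symmetric reference basis with fundamental function $m^{1/p}$, verify the summability hypothesis (immediate since $1/p>1$), and invoke Corollary~\ref{cor:LUDF}. The paper's version is simply more terse and omits the explicit unpacking of the ``in particular'' clause that you spell out via Theorem~\ref{thm:QGBRTOP} and the almost-greedy characterization.
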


\begin{proof}
The basis $\HB$ of $H_p(\Disc)$ constructed in \cite{Woj1984} is unconditional (hence truncation quasi-greedy) and democratic with
\[
\ldf[\HB,H_p(\Disc)](m) \approx m^{1/p}\approx \udf[\HB,H_p(\Disc)](m),\quad m\in \NN,
\]
(see \cite{KLW1990}). Now our claim follows from Theorem~\ref{cor:LUDF}.
\end{proof}

We note that for $d>1$ the canonical unconditional basis $\HB^d$ of the Hardy space $H_p(\Disc^d)$, constructed from the canonical unconditional basis $\HB$ of $H_p(\Disc)$ by means of tensor products, inherits the unconditionality from $\HB$, but not its democracy. Indeed, for every $d\in\NN$ we have
\[
\udf[\HB^d,H_p(\Disc^d)](m) \approx m^{1/p}, \quad m\in\NN,
\]
whereas
\begin{equation}\label{eq:HpLD}
\ldf[\HB^d,H_p(\Disc^d)](m) \approx h_m:=m^{1/p}(1+\log m)^{(d-1)(1/2-1/p)},\; m\in\NN,
\end{equation}
(see \cite{Temlyakov1998c}*{\S4} and \cite{Woj2000}*{\S4}).

\begin{theorem}\label{thm:LUDFBases}
Let $\XX$ be a quasi-Banach space with a truncation quasi-greedy basis $\XB$. Suppose that $\ldf[\XB,\XX]$ is doubling with
\[
\sum_{m=1}^\infty \frac{1}{\ldf[\XB,\XX](m)}<\infty.
\]
If $\YB$ is a truncation quasi-greedy basis of $\XX$, then
\[
\udf[\YB,\XX] \approx\udf[\XB,\XX].
\]
Moreover, if $\ldf[\YB,\XX]$ is doubling then
\[
\ldf[\YB,\XX] \approx\ldf[\XB,\XX].
\]
\end{theorem}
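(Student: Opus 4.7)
The plan is to derive Theorem~\ref{thm:LUDFBases} by applying Theorems~\ref{thm:TQGTQG} and \ref{thm:UPF} twice, once in each direction, with the roles of $\XB$ and $\YB$ interchanged. The key preliminary observation is that each of $\XB$ and $\YB$ is a basis of the entire space $\XX$, and hence its closed linear span is $\XX$ itself, so it trivially spans a complemented subspace (the identity serves as the projection) and has the HBEP in $\XX$. Being truncation quasi-greedy, each basis is also UCC.

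First I apply the two theorems with $\XB$ as the reference basis and with the doubling sequence $\sss := \ldf[\XB,\XX]$, which by hypothesis satisfies $\sum_m 1/s_m<\infty$. Theorem~\ref{thm:UPF}(b), applied to the complemented truncation quasi-greedy basic sequence $\YB$, yields $\udf[\YB,\XX] \lesssim \udf[\XB,\XX]$. Theorem~\ref{thm:TQGTQG}, applied to the UCC basic sequence $\YB$ (which has the HBEP), yields $\ldf[\YB,\XX] \gtrsim \ldf[\XB,\XX]$.

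I then reverse the roles. The latter inequality shows that $\ldf[\YB,\XX] \gtrsim \sss$ for the doubling sequence $\sss = \ldf[\XB,\XX]$, so the hypotheses of Theorem~\ref{thm:UPF}(b) are met with $\YB$ now playing the role of the reference basis. Applying it to the complemented truncation quasi-greedy basic sequence $\XB$ gives $\udf[\XB,\XX] \lesssim \udf[\YB,\XX]$, which together with the previous bound yields the first assertion $\udf[\YB,\XX] \approx \udf[\XB,\XX]$.

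For the moreover part, assume in addition that $\ldf[\YB,\XX]$ is doubling. The inequality $\ldf[\YB,\XX] \gtrsim \ldf[\XB,\XX]$ already established ensures $\sum_m 1/\ldf[\YB,\XX](m)<\infty$, so Theorem~\ref{thm:TQGTQG} is available with $\YB$ as the reference basis and $\sss = \ldf[\YB,\XX]$. Applying it to the UCC basic sequence $\XB$ (which has the HBEP, being a basis of $\XX$) produces $\ldf[\XB,\XX] \gtrsim \ldf[\YB,\XX]$, completing the equivalence. The argument is essentially bookkeeping, so there is no substantive obstacle; the only subtlety is that $\ldf[\YB,\XX]$ is not a priori doubling, which is precisely why the equivalence of lower democracy functions requires the extra doubling hypothesis, and why one must first upgrade the lower bound on $\ldf[\YB,\XX]$ using $\ldf[\XB,\XX]$ before invoking the theorems in the reverse direction.
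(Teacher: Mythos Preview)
Your proof is correct and follows essentially the same approach as the paper: apply Theorems~\ref{thm:TQGTQG} and \ref{thm:UPF} first with $\XB$ as reference and $\sss=\ldf[\XB,\XX]$, then swap the roles of $\XB$ and $\YB$ (using the same $\sss$ for the upper estimate and $\sss=\ldf[\YB,\XX]$ for the lower one under the extra doubling hypothesis). The paper compresses your first two applications into a single citation of Theorem~\ref{thm:LUDF}, but the underlying argument is identical.
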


\begin{proof}
Apply Theorem~\ref{thm:LUDF} to obtain the estimates given there. Using Theorem~\ref{thm:UPF} with $\sss=\ldf[\XB,\XX]$ and the roles of $\XB$ and $\YB$ swapped, we deduce the equivalence between the upper democracy functions. If $\ldf[\YB,\XX]$ is doubling, we can apply Theorem~\ref{thm:TQGTQG} with $\sss=\ldf[\YB,\XX]$ and the roles of $\XB$ and $\YB$ swapped. This gives the equivalence between the lower democracy functions.
\end{proof}

\begin{theorem}\label{thm:LUDFBasesDos}
Let $\XX$ be a quasi-Banach space with a truncation quasi-greedy basis $\XB$. Suppose that $\ldf[\XB,\XX]$ is doubling with
\[
\sum_{m=1}^\infty \frac{1}{\ldf[\XB,\XX](m)}<\infty.
\]
If $\XB$ is not democratic then $\XX$ has no squeeze-symmetric bases.
\end{theorem}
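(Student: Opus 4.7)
The plan is to proceed by contradiction: suppose that $\YB$ is a squeeze-symmetric basis of $\XX$. By the characterization of squeeze-symmetric bases recorded in the preliminaries (they coincide with truncation quasi-greedy democratic bases), $\YB$ is truncation quasi-greedy and democratic, so $\ldf[\YB,\XX]\approx\udf[\YB,\XX]$.

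First, I will apply Theorem~\ref{thm:LUDFBases} to the pair $(\XB,\YB)$. The hypotheses of the current theorem give exactly what is required about $\XB$ (truncation quasi-greedy, with $\ldf[\XB,\XX]$ doubling and the reciprocal series summable), and $\YB$ is truncation quasi-greedy, so the theorem yields $\udf[\YB,\XX]\approx\udf[\XB,\XX]$.

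Next, I will verify that $\YB$ itself satisfies the hypotheses needed to apply Theorem~\ref{thm:LUDFBases} with the roles of $\XB$ and $\YB$ swapped. The democracy of $\YB$ combined with the previous step gives $\ldf[\YB,\XX]\approx\udf[\XB,\XX]$. Upper democracy functions of UCC bases are always doubling (the fact recalled from \cite{AABW2021}*{\S8} in the proof of Corollary~\ref{cor:UCCSQ}), so $\ldf[\YB,\XX]$ is doubling; and since $\ldf[\YB,\XX]\gtrsim\ldf[\XB,\XX]$, the summability condition transfers to $\YB$. The ``moreover'' part of Theorem~\ref{thm:LUDFBases} then applies, its extra hypothesis that $\ldf[\XB,\XX]$ be doubling being supplied by the standing assumption, and delivers $\ldf[\XB,\XX]\approx\ldf[\YB,\XX]$.

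Chaining the equivalences gives
\[
\ldf[\XB,\XX]\approx\ldf[\YB,\XX]\approx\udf[\YB,\XX]\approx\udf[\XB,\XX],
\]
which forces $\XB$ to be democratic via property (iii) of UCC bases from Section~\ref{sect:preliminary}, contradicting the hypothesis. The whole argument is a short bootstrap that invokes Theorem~\ref{thm:LUDFBases} twice, in opposite directions; I do not expect any substantive obstacle, only the careful bookkeeping needed to confirm that $\YB$ inherits the doubling and summability conditions that trigger the second application.
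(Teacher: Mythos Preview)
Your proof is correct and follows essentially the same contradiction strategy as the paper, leveraging Theorem~\ref{thm:LUDFBases} to force $\ldf[\XB,\XX]\approx\udf[\XB,\XX]$. The paper is marginally more direct: it applies Theorem~\ref{thm:LUDFBases} only once, noting immediately that $\ldf[\YB,\XX]\approx\udf[\YB,\XX]$ is doubling (since the upper democracy function of a UCC basis is doubling) and thus invoking both the main conclusion and the ``moreover'' clause in a single pass, so your second, role-swapped application is unnecessary.
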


\begin{proof}
Assume by contradiction that $\YB$ has a squeeze-symmetric basis $\YB$. Then $\ldf[\YB,\XX]\approx\udf[\YB,\XX]$ is doubling. By Theorem~\ref{thm:LUDFBases} we have $\udf[\YB,\XX] \approx\udf[\XB,\XX]$ and $\ldf[\YB,\XX] \approx\ldf[\XB,\XX]$, which leads to the absurdity $\ldf[\XB,\XX]\approx\udf[\XB,\XX]$.
\end{proof}

\begin{corollary}\label{thm:HpD}
Let $0<p<1$ and $d\in\NN$. If $d\ge 2$ the space $H_p(\Disc^d)$ has no sequeeze-symmetric bases. In particular, $H_p(\Disc^d)$ has no almost greedy bases.
\end{corollary}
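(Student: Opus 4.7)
The plan is to apply Theorem~\ref{thm:LUDFBasesDos} to the canonical tensor-product basis $\XB = \HB^d$ of $\XX = H_p(\Disc^d)$. Since $\HB^d$ is unconditional, it is in particular truncation quasi-greedy, so we need only verify the quantitative hypotheses on its democracy functions and the non-democracy assertion.

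First I would read off from \eqref{eq:HpLD} the equivalence
\[
\ldf[\HB^d, H_p(\Disc^d)](m) \approx h_m = m^{1/p}(1+\log m)^{(d-1)(1/2-1/p)}, \quad m\in\NN.
\]
Since $0<p<1$ we have $1/2 - 1/p < 0$, and because $d\ge 2$ the exponent $\alpha:=(d-1)(1/2-1/p)$ is strictly negative. The sequence $h_m$ is doubling, as
\[
h_{2m} = (2m)^{1/p}\bigl(1+\log 2 + \log m\bigr)^\alpha \approx 2^{1/p} m^{1/p}(1+\log m)^\alpha = 2^{1/p} h_m
\]
for $m$ large, and the few initial values are absorbed by the constants. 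Moreover, since $1/p > 1$, the series
\[
\sum_{m=1}^\infty \frac{1}{h_m} = \sum_{m=1}^\infty \frac{1}{m^{1/p}(1+\log m)^{\alpha}}
\]
converges (the polynomial decay dominates the mild logarithmic growth coming from $\alpha<0$).

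Next I would observe that $\HB^d$ is \emph{not} democratic: the upper democracy function is $\udf[\HB^d, H_p(\Disc^d)](m) \approx m^{1/p}$, whereas the lower one satisfies $\ldf[\HB^d, H_p(\Disc^d)](m)\approx h_m$, and the ratio $\udf/\ldf \approx (1+\log m)^{-\alpha}$ tends to infinity as $m\to\infty$ because $-\alpha>0$. Hence the hypotheses of Theorem~\ref{thm:LUDFBasesDos} are in force, and the theorem yields that $H_p(\Disc^d)$ admits no squeeze-symmetric basis.

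For the \emph{in particular} clause, I would recall the observation made right after the definition of squeeze-symmetric in Section~\ref{sect:preliminary} (as a consequence of \cite{AABW2021}*{Lemma 9.3 and Corollary 9.15}): every almost greedy basis of a quasi-Banach space is squeeze-symmetric. Consequently, the non-existence of squeeze-symmetric bases in $H_p(\Disc^d)$ for $d\ge 2$ immediately rules out almost greedy bases. There is no real obstacle in this argument beyond correctly extracting the asymptotics of $\ldf[\HB^d, H_p(\Disc^d)]$ from the available references; the rest is a mechanical verification that the hypotheses of Theorem~\ref{thm:LUDFBasesDos} are met.
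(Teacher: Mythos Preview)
Your proof is correct and follows essentially the same approach as the paper: you apply Theorem~\ref{thm:LUDFBasesDos} to the tensor-product basis $\HB^d$, verifying that $\ldf[\HB^d,H_p(\Disc^d)]\approx(h_m)$ is doubling, that $\sum 1/h_m<\infty$, and that $\HB^d$ is not democratic because $(h_m)$ is not equivalent to $(m^{1/p})$. The paper's proof is simply a terse version of yours, and your handling of the ``in particular'' clause via the remark that almost greedy bases are squeeze-symmetric is exactly what is intended.
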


\begin{proof}
Let $\hh=(h_m)_{m=1}^\infty$ be as in \eqref{eq:HpLD}. Since $\hh$ is doubling and non-equivalent to $(m^{1/p})_{m=1}^\infty$, the result follows from Theorem~\ref{thm:LUDFBasesDos}.
\end{proof}

We close this section with a quantitative estimate for the performance of the thresholding greedy algorithm implemented in Hardy spaces. To put this in context, we recall that in order to analyze the efficiency of the greedy algorithm with respect a basis $\XB$ of a quasi-Banach space $\XX$, it is customary to consider, for each $m\in\NN$, the smallest constant $C\ge 1$ such that
\[
\norm{f- \GG_m(f) } \le C \norm{f- z}
\]
for all $f\in \XX$ and all $m$-term linear combinations $z$ of vectors from $\XB$. This constant is called the $m$th \emph{Lebesgue constant}, and it is denoted by $\Leb_m[\XB,\XX]$. The growth of the Lebesgue constants of bases in Banach spaces has been studied in \cites{GHO2013,BBGHO2018}. We point out that the relation between the Lebesgue constants, the conditionality parameters, and the democratic deficiency parameters established in \cite{BBGHO2018} still holds in the non-locally convex setting. To be precise, if we put
\begin{equation*}
\kk_m[\XB,\XX] =\sup_{\abs{A}\le m} \norm{S_A[\XB,\XX]}, \quad m\in\NN,
\end{equation*}
and
\[
\dem_m[\XB,\XX]=\sup\limits_{\abs{A}=\abs{B}\le m}\frac{\norm{\Ind_A[\XB,\XX]}{\norm \Ind_B[\XB,\XX]}}, \quad m\in\NN
\]
then for any basis $\XB$ of a quasi-Banach space $\XX$ we have
\begin{equation}\label{eq:LebEstimates}
\Leb_m[\XB,\XX]\approx \max\{\kk_m[\XB,\XX] , \dem_m[\XB,\XX]\} , \quad m\in\NN.
\end{equation}
Moreover, if $\XB$ is UCC, then
\begin{equation}\label{dem:equivalence}
\dem_m[\XB,\XX]\approx\sup\limits_{l\le m}\frac{\udf[\XB,\XX](l)}{\ldf[\XB,\XX](l)}, \quad m\in\NN.
\end{equation}
(see \cite{AAB2021}*{\S1}).

\begin{corollary}\label{thm:HpD2}
Let $0<p<1$ and $d\in\NN$. Suppose $\XB$ is a truncation quasi-greedy basis of a complemented subspace of $H_p(\Disc^d)$. Then for $m\in\NN$ we have
\[
m^{1/p}(1+\log m)^{-\alpha} \lesssim \ldf[\XB,H_p(\Disc^d)](m) \lesssim \udf[\XB,H_p(\Disc^d)](m) \lesssim m^{1/p},
\]
where $\alpha=(d-1)(1/2-1/p)$. Consequently,
\[
\dem_m[\XB,H_p(\Disc^d)]\lesssim (1+\log m)^{\alpha},\quad m\in\NN.
\]
\end{corollary}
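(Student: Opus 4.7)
The plan is to invoke Theorem~\ref{thm:LUDF} directly, with the canonical tensor-product unconditional basis $\HB^d$ of $H_p(\Disc^d)$ as the reference basis, and the given basis $\XB$ (viewed as a basic sequence spanning a complemented subspace of $H_p(\Disc^d)$) as the complemented truncation quasi-greedy basic sequence. The input information I will use is \eqref{eq:HpLD}, which records that $\udf[\HB^d,H_p(\Disc^d)](m)\approx m^{1/p}$ and $\ldf[\HB^d,H_p(\Disc^d)](m)\approx h_m=m^{1/p}(1+\log m)^{(d-1)(1/2-1/p)}$; being unconditional, $\HB^d$ is automatically truncation quasi-greedy.

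Before applying Theorem~\ref{thm:LUDF} I would verify its two hypotheses on $\sss:=(h_m)_{m=1}^\infty$. First, $\sss$ is doubling: $m^{1/p}$ is doubling and the logarithmic factor $(1+\log m)^{(d-1)(1/2-1/p)}$ is slowly varying, and the product of doubling sequences is doubling. Second, $\sum_{m=1}^\infty 1/h_m<\infty$ because $1/p>1$, so a slow logarithmic perturbation of $m^{-1/p}$ remains summable. The basis $\XB$ of a complemented subspace $\YY\subseteq H_p(\Disc^d)$ is a complemented basic sequence of $H_p(\Disc^d)$ whose democracy functions inside $\YY$ and inside $H_p(\Disc^d)$ coincide.

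With these ingredients in hand, Theorem~\ref{thm:LUDF} yields
\[
h_m \;\lesssim\; \ldf[\XB,H_p(\Disc^d)](m) \;\lesssim\; \udf[\XB,H_p(\Disc^d)](m) \;\lesssim\; m^{1/p},\quad m\in\NN,
\]
which is precisely the chain of estimates in the corollary. For the democratic-deficiency bound I would use \eqref{dem:equivalence} (applicable because truncation quasi-greedy bases are UCC) to write
\[
\dem_m[\XB,H_p(\Disc^d)] \;\approx\; \sup_{l\le m}\frac{\udf[\XB,H_p(\Disc^d)](l)}{\ldf[\XB,H_p(\Disc^d)](l)} \;\lesssim\; \sup_{l\le m}\frac{l^{1/p}}{h_l},
\]
and observe that $l^{1/p}/h_l=(1+\log l)^{(d-1)(1/p-1/2)}$ is nondecreasing in $l$, so the supremum is attained at $l=m$ up to constants, giving the stated logarithmic bound.

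I do not foresee any genuine obstacle: the corollary is essentially a bookkeeping consequence of Theorem~\ref{thm:LUDF} once one combines it with the known asymptotics of the democracy functions of the canonical basis $\HB^d$. The only aspects that require a moment's thought are the routine verification that $h_m$ is doubling and summably reciprocable, and the identification of a basis of a complemented subspace with a complemented basic sequence of the ambient space.
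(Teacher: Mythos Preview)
Your proposal is correct and follows exactly the paper's approach: the paper's proof reads ``Just combine Theorem~\ref{thm:LUDF} with equations \eqref{eq:HpLD} and \eqref{dem:equivalence},'' and you have spelled out precisely this combination, including the routine checks that $(h_m)_{m=1}^\infty$ is doubling and has summable reciprocals, and the use of the UCC property to invoke \eqref{dem:equivalence}.
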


\begin{proof}
Just combine Theorem~\ref{thm:LUDF} with equations \eqref{eq:HpLD} and \eqref{dem:equivalence}.
\end{proof}

\begin{corollary}\label{cor:LOD}Let $0<p\le 1$ and $d\in\NN$. Suppose that $\XB$ is a truncation quasi-greedy basis of a complemented subspace of $H_p(\Disc^d)$. Then
\[
\Leb_m[\XB, H_{p}(\Disc^d) ] \lesssim (1+\log m)^{\alpha}, \quad m\in\NN,
\]
where $\alpha=\max\{1/p,(d-1)(1/2-1/p)\}$.
\end{corollary}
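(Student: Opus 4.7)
The plan is to combine the identity \eqref{eq:LebEstimates}, which reduces the estimation of $\Leb_m[\XB, H_p(\Disc^d)]$ to estimating the two parameters $\kk_m[\XB, H_p(\Disc^d)]$ and $\dem_m[\XB, H_p(\Disc^d)]$ separately, with bounds for each of them tailored to this situation. Since every truncation quasi-greedy basis is UCC, the equivalence \eqref{dem:equivalence} is available. Inserting into it the lower and upper democracy estimates provided by Corollary~\ref{thm:HpD2} yields
\begin{equation*}
\dem_m[\XB,H_p(\Disc^d)] \;\lesssim\; \sup_{l\le m}\frac{l^{1/p}}{l^{1/p}(1+\log l)^{-(d-1)(1/p-1/2)}} \;\approx\; (1+\log m)^{(d-1)(1/p-1/2)}.
\end{equation*}

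For the conditionality parameter I would invoke the general logarithmic bound for truncation quasi-greedy bases in $p$-Banach spaces, namely $\kk_m[\XB,H_p(\Disc^d)]\lesssim (1+\log m)^{1/p}$, which is the $p$-convex analogue (proved, e.g., within the framework of \cite{AABW2021}) of the classical Garrigós--Hernández--Oikhberg estimate $\kk_m\lesssim 1+\log m$ in the Banach case. Applied to our complemented subspace of $H_p(\Disc^d)$ and combined with the preceding bound through \eqref{eq:LebEstimates}, this gives
\begin{equation*}
\Leb_m[\XB,H_p(\Disc^d)] \;\lesssim\; \max\bigl\{(1+\log m)^{1/p},\,(1+\log m)^{(d-1)(1/p-1/2)}\bigr\} \;=\; (1+\log m)^{\alpha},
\end{equation*}
as claimed.

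The step I expect to be the main obstacle is having the conditionality bound $\kk_m \lesssim (1+\log m)^{1/p}$ available in exactly this form: it is sharper to prove in the $p$-Banach setting than its Banach counterpart because one can no longer freely triangulate partial sums. If a citable version is not at hand, the workaround is a direct argument that uses only the unconditionality for constant coefficients of a truncation quasi-greedy basis (inequality \eqref{eq:succ}), the boundedness of the restricted truncation operators \eqref{eq:Cr}, and the $p$-triangle inequality to control the projection $S_A$ on a coordinate block of size $\le m$ by a $(1+\log m)^{1/p}$-multiple of the norm of the argument, following the scheme of \cite{GHO2013} but replacing each use of subadditivity by its $p$-convex analogue. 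Once this conditionality estimate is in place, the rest of the corollary is a mechanical two-term maximum, with the $p=1$ case reducing to the classical Banach setting.
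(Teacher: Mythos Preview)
Your approach is essentially the same as the paper's. The paper's proof is the one-liner ``Combine \eqref{eq:LebEstimates}, Corollary~\ref{thm:HpD2}, and \cite{AAW2021b}*{Theorem 5.1}'': the democracy deficiency bound is already packaged in Corollary~\ref{thm:HpD2} (which itself proves it via \eqref{dem:equivalence} exactly as you do), and the conditionality bound $\kk_m\lesssim(1+\log m)^{1/p}$ for truncation quasi-greedy bases in $p$-Banach spaces is quoted from \cite{AAW2021b}*{Theorem 5.1} rather than \cite{AABW2021}. So your only uncertainty---the precise citable source for the $\kk_m$ estimate---is resolved by that reference, and no workaround argument is needed.
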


\begin{proof}
Combine \eqref{eq:LebEstimates}, Corollary~\ref{thm:HpD2}, and \cite{AAW2021b}*{Theorem 5.1}.
\end{proof}

\section{Further applications}\label{sect:applications}\noindent
Apart from the spaces $H_p(\Disc)$ and $\ell_p$ for $0<p<1$, Corollary~\ref{cor:LUDF} also applies to Fefferman-Stein's real Hardy spaces $H_p(\RR^d)$ for $d\in\NN$. More generally, applying Corollary~\ref{cor:LUDF} with $\XB$ a suitable wavelet basis gives that, if $0<p<1$, $0<p\le q\le\infty$, $s\in\RR$ and $d\in\NN$, all quasi-greedy bases of the homegeneous and the inhomegeneous Triebel-Lizorkin spaces $\ring{F}_{p,q}^s(\RR^d)$ and $F_{p,q}^s(\RR^d)$ and their complemented subspaces are democratic, with fundamental function of the same order as $(m^{1/p})_{m=1}^\infty$ (see \cite{AABW2021}*{\S11.3}).

Corollary~\ref{cor:LUDF} also applies to the $p$-convexified Tsirelson space $\Ts^{(p)}$, $0<p<1$.

The fundamental function of the unit vector system of the Lorentz sequence spaces $\ell_{p,q}$, $0< q\le \infty$, is equivalent to $(m^{1/p})_{m=1}^\infty$. We infer from Corollary~\ref{cor:LUDF} that all quasi-greedy bases of $\ell_{p,q}$ (we take its separable part if $q=\infty$) are democratic with fundamental function equivalent to $(m^{1/p})_{m=1}^\infty$.

An important class of sequence space whose unit vector system is not democratic are mixed-norm sequence spaces
\begin{equation}\label{eq:mixedspaces}
\ell_p\oplus\ell_q,\;
\ell_p(\ell_q),\;
\ell_q(\ell_p),\;
\left( \bigoplus_{n=1}^\infty \ell_q^n\right)_{\ell_p},\;
\left( \bigoplus_{n=1}^\infty \ell_p^n\right)_{\ell_q}, \quad 0<q<p.
\end{equation}
The lower democracy function of all these spaces is $(m^{1/p})_{m=1}^\infty$, whereas their upper lower democracy function is $(m^{1/q})_{m=1}^\infty$. In the case when $p<1$, we apply Theorem~\ref{thm:LUDFBasesDos} to obtain that no space from the list in \eqref{eq:mixedspaces} has an almost greedy basis. This partially solves \cite{AABW2021}*{Problem 13.8}.


\begin{bibdiv}
\begin{biblist}

\bib{AlbiacAnsorena2016}{article}{
author={Albiac, Fernando},
author={Ansorena, Jos\'{e}~L.},
title={Lorentz spaces and embeddings induced by almost greedy bases in
{B}anach spaces},
date={2016},
ISSN={0176-4276},
journal={Constr. Approx.},
volume={43},
number={2},
pages={197\ndash 215},
review={\MR{3472645}},
}

\bib{AlbiacAnsorena2020}{article}{
author={Albiac, Fernando},
author={Ansorena, Jos\'{e}~L.},
title={Projections and unconditional bases in direct sums of $\ell_p$
spaces, $0<p\leq\infty$},
journal={Math. Nachr.},
volume={294},
date={2021},
number={11},
pages={2052--2062},
issn={0025-584X},
review={\MR{4371282}},
}

\bib{AlbiacAnsorena2021}{article}{
author={Albiac, F.},
author={Ansorena, Jos\'{e}~L.},
title={Uniqueness of unconditional basis of infinite direct sums of
quasi-Banach spaces},
journal={Positivity},
volume={26},
date={2022},
number={2},
pages={35},
issn={1385-1292},
review={\MR{4400173}},
}

\bib{AAB2021}{article}{
author={Albiac, Fernando},
author={Ansorena, Jos\'{e}~L.},
author={Bern\'{a}, Pablo~M.},
title={New parameters and {L}ebesgue-type estimates in greedy
approximation},
date={2021},
journal={arXiv e-prints},
eprint={2104.10912},
}

\bib{AABW2021}{article}{
author={Albiac, Fernando},
author={Ansorena, Jos\'{e}~L.},
author={Bern\'{a}, Pablo~M.},
author={Wojtaszczyk, Przemys{\l}aw},
title={Greedy approximation for biorthogonal systems in quasi-{B}anach
spaces},
date={2021},
journal={Dissertationes Math. (Rozprawy Mat.)},
volume={560},
pages={1\ndash 88},
}

\bib{AAW2021b}{article}{
author={Albiac, Fernando},
author={Ansorena, Jos\'{e}~L.},
author={Wojtaszczyk, Przemys{\l}aw},
title={On certain subspaces of {$\ell_p$} for {$0<p\leq1$} and their
applications to conditional quasi-greedy bases in {$p$}-{B}anach spaces},
date={2021},
ISSN={0025-5831},
journal={Math. Ann.},
volume={379},
number={1-2},
pages={465\ndash 502},
url={https://doi-org/10.1007/s00208-020-02069-3},
review={\MR{4211094}},
}

\bib{AAW2021}{article}{
author={Albiac, Fernando},
author={Ansorena, Jos\'{e}~L.},
author={Wojtaszczyk, Przemys{\l}aw},
title={Quasi-greedy bases in {$\ell_ p$} {$(0<p<1)$} are democratic},
date={2021},
ISSN={0022-1236},
journal={J. Funct. Anal.},
volume={280},
number={7},
pages={108871, 21},
url={https://doi-org/10.1016/j.jfa.2020.108871},
review={\MR{4211033}},
}

\bib{AlbiacKalton2016}{book}{
author={Albiac, Fernando},
author={Kalton, Nigel~J.},
title={Topics in {B}anach space theory},
edition={Second Edition},
series={Graduate Texts in Mathematics},
publisher={Springer, [Cham]},
date={2016},
volume={233},
ISBN={978-3-319-31555-3; 978-3-319-31557-7},
url={https://doi.org/10.1007/978-3-319-31557-7},
note={With a foreword by Gilles Godefroy},
review={\MR{3526021}},
}

\bib{BM2016}{article}{
author={Barton, Ariel},
author={Mayboroda, Svitlana},
title={Layer potentials and boundary-value problems for second order
elliptic operators with data in {B}esov spaces},
date={2016},
ISSN={0065-9266},
journal={Mem. Amer. Math. Soc.},
volume={243},
number={1149},
pages={v+110},
url={https://doi.org/10.1090/memo/1149},
review={\MR{3517153}},
}

\bib{BennettSharpley1988}{book}{
author={Bennett, Colin},
author={Sharpley, Robert},
title={Interpolation of operators},
series={Pure and Applied Mathematics},
publisher={Academic Press, Inc., Boston, MA},
date={1988},
volume={129},
ISBN={0-12-088730-4},
review={\MR{928802}},
}

\bib{BBGHO2018}{article}{
author={Bern\'{a}, Pablo~M.},
author={Blasco, {\'O}scar},
author={Garrig\'{o}s, Gustavo},
author={Hern\'{a}ndez, Eugenio},
author={Oikhberg, Timur},
title={Embeddings and {L}ebesgue-type inequalities for the greedy
algorithm in {B}anach spaces},
date={2018},
ISSN={0176-4276},
journal={Constr. Approx.},
volume={48},
number={3},
pages={415\ndash 451},
url={https://doi.org/10.1007/s00365-018-9415-9},
review={\MR{3869447}},
}

\bib{CRS2007}{article}{
author={Carro, Mar\'{\i}a~J.},
author={Raposo, Jos\'{e}~A.},
author={Soria, Javier},
title={Recent developments in the theory of {L}orentz spaces and
weighted inequalities},
date={2007},
ISSN={0065-9266},
journal={Mem. Amer. Math. Soc.},
volume={187},
number={877},
pages={xii+128},
url={https://doi-org/10.1090/memo/0877},
review={\MR{2308059}},
}

\bib{Cohen1988}{book}{
author={Cohen, Albert},
title={Numerical analysis of wavelet methods},
series={Studies in Mathematics and its Applications},
publisher={North-Holland Publishing Co., Amsterdam},
date={2003},
volume={32},
ISBN={0-444-51124-5},
review={\MR{1990555}},
}

\bib{Day1973}{book}{
author={Day, Mahlon~M.},
title={Normed linear spaces},
edition={Third},
series={Ergebnisse der Mathematik und ihrer Grenzgebiete, Band 21},
publisher={Springer-Verlag, New York-Heidelberg},
date={1973},
review={\MR{0344849}},
}

\bib{DKK2003}{article}{
author={Dilworth, Stephen~J.},
author={Kalton, Nigel~J.},
author={Kutzarova, Denka},
title={On the existence of almost greedy bases in {B}anach spaces},
date={2003},
ISSN={0039-3223},
journal={Studia Math.},
volume={159},
number={1},
pages={67\ndash 101},
url={https://doi.org/10.4064/sm159-1-4},
note={Dedicated to Professor Aleksander Pe{\l}czy\'nski on the occasion
of his 70th birthday},
review={\MR{2030904}},
}

\bib{DKKT2003}{article}{
author={Dilworth, Stephen~J.},
author={Kalton, Nigel~J.},
author={Kutzarova, Denka},
author={Temlyakov, Vladimir~N.},
title={The thresholding greedy algorithm, greedy bases, and duality},
date={2003},
ISSN={0176-4276},
journal={Constr. Approx.},
volume={19},
number={4},
pages={575\ndash 597},
url={https://doi-org/10.1007/s00365-002-0525-y},
review={\MR{1998906}},
}

\bib{DSBT2012}{article}{
author={Dilworth, Stephen~J.},
author={Soto-Bajo, Mois\'es},
author={Temlyakov, Vladimir~N.},
title={Quasi-greedy bases and {L}ebesgue-type inequalities},
date={2012},
ISSN={0039-3223},
journal={Studia Math.},
volume={211},
number={1},
pages={41\ndash 69},
url={https://doi-org/10.4064/sm211-1-3},
review={\MR{2990558}},
}

\bib{Duren1970}{book}{
author={Duren, Peter~L.},
title={Theory of {$H^{p}$} spaces},
series={Pure and Applied Mathematics, Vol. 38},
publisher={Academic Press, New York-London},
date={1970},
review={\MR{0268655}},
}

\bib{DRS1969}{article}{
author={Duren, Peter~L.},
author={Romberg, Bernhard~W.},
author={Shields, Allen~L.},
title={Linear functionals on {$H^{p}$} spaces with {$0<p<1$}},
date={1969},
ISSN={0075-4102},
journal={J. Reine Angew. Math.},
volume={238},
pages={32\ndash 60},
review={\MR{259579}},
}

\bib{FS1972}{article}{
author={Fefferman, Charles~L.},
author={Stein, Elias~M.},
title={{$H^{p}$} spaces of several variables},
date={1972},
ISSN={0001-5962},
journal={Acta Math.},
volume={129},
number={3-4},
pages={137\ndash 193},
url={https://doi-org/10.1007/BF02392215},
review={\MR{447953}},
}

\bib{GHO2013}{article}{
author={Garrig\'os, Gustavo},
author={Hern\'{a}ndez, Eugenio},
author={Oikhberg, Timur},
title={{L}ebesgue-type inequalities for quasi-greedy bases},
date={2013},
ISSN={0176-4276},
journal={Constr. Approx.},
volume={38},
number={3},
pages={447\ndash 470},
url={https://doi-org/10.1007/s00365-013-9209-z},
review={\MR{3122278}},
}

\bib{Hall1948}{article}{
author={Hall, M., Jr.},
title={Distinct representatives of subsets},
date={1948},
ISSN={0002-9904},
journal={Bull. Amer. Math. Soc.},
volume={54},
pages={922\ndash 926},
url={https://doi.org/10.1090/S0002-9904-1948-09098-X},
review={\MR{27033}},
}

\bib{Hall1935}{article}{
author={Hall, Philip},
title={On representatives of subsets},
date={1935},
ISSN={0024-6107},
journal={J. London Math. Soc.},
volume={10},
pages={26\ndash 30},
}

\bib{HKZ1999}{book}{
author={Hedenmalm, Haakan},
author={Korenblum, Boris},
author={Zhu, Kehe},
title={Theory of {B}ergman spaces},
series={Graduate Texts in Mathematics},
publisher={Springer-Verlag, New York},
date={2000},
volume={199},
ISBN={0-387-98791-6},
url={https://doi.org/10.1007/978-1-4612-0497-8},
review={\MR{1758653}},
}

\bib{KMM}{incollection}{
author={Kalton, Nigel},
author={Mayboroda, Svitlana},
author={Mitrea, Marius},
title={Interpolation of {H}ardy-{S}obolev-{B}esov-{T}riebel-{L}izorkin
spaces and applications to problems in partial differential equations},
date={2007},
booktitle={Interpolation theory and applications},
series={Contemp. Math.},
volume={445},
publisher={Amer. Math. Soc., Providence, RI},
pages={121\ndash 177},
url={https://doi.org/10.1090/conm/445/08598},
review={\MR{2381891}},
}

\bib{Kalton1977}{article}{
author={Kalton, Nigel~J.},
title={Orlicz sequence spaces without local convexity},
date={1977},
ISSN={0305-0041},
journal={Math. Proc. Cambridge Philos. Soc.},
volume={81},
number={2},
pages={253\ndash 277},
url={https://doi-org/10.1017/S0305004100053342},
review={\MR{433194}},
}

\bib{Kalton1980}{article}{
author={Kalton, Nigel~J.},
title={Convexity, type and the three space problem},
date={1980/81},
ISSN={0039-3223},
journal={Studia Math.},
volume={69},
number={3},
pages={247\ndash 287},
url={https://doi-org/10.4064/sm-69-3-247-287},
review={\MR{647141}},
}

\bib{Kalton1984b}{article}{
author={Kalton, Nigel~J.},
title={Convexity conditions for nonlocally convex lattices},
date={1984},
ISSN={0017-0895},
journal={Glasgow Math. J.},
volume={25},
number={2},
pages={141\ndash 152},
url={https://doi-org/10.1017/S0017089500005553},
review={\MR{752808}},
}

\bib{Kalton1984}{article}{
author={Kalton, Nigel~J.},
title={Locally complemented subspaces and $\mathcal{L}_{p}$-spaces for
$0<p<1$},
date={1984},
ISSN={0025-584X},
journal={Math. Nachr.},
volume={115},
pages={71\ndash 97},
url={https://doi-org/10.1002/mana.19841150107},
review={\MR{755269}},
}

\bib{KLW1990}{article}{
author={Kalton, Nigel~J.},
author={Ler\'{a}noz, Camino},
author={Wojtaszczyk, Przemys{\l}aw},
title={Uniqueness of unconditional bases in quasi-{B}anach spaces with
applications to {H}ardy spaces},
date={1990},
ISSN={0021-2172},
journal={Israel J. Math.},
volume={72},
number={3},
pages={299\ndash 311 (1991)},
url={https://doi.org/10.1007/BF02773786},
review={\MR{1120223}},
}

\bib{Kinchine1923}{article}{
author={Khintchine, Aleksandr~Y.},
title={{\"U}ber dyadische {B}r{\"u}che},
date={1923},
ISSN={0025-5874},
journal={Math. Z.},
volume={18},
number={1},
pages={109\ndash 116},
url={https://doi-org/10.1007/BF01192399},
review={\MR{1544623}},
}

\bib{KoTe1999}{article}{
author={Konyagin, Sergei~V.},
author={Temlyakov, Vladimir~N.},
title={A remark on greedy approximation in {B}anach spaces},
date={1999},
ISSN={1310-6236},
journal={East J. Approx.},
volume={5},
number={3},
pages={365\ndash 379},
review={\MR{1716087}},
}

\bib{LinTza1979}{book}{
author={Lindenstrauss, Joram},
author={Tzafriri, Lior},
title={Classical {B}anach spaces. {II} -- function spaces},
series={Ergebnisse der Mathematik und ihrer Grenzgebiete [Results in
Mathematics and Related Areas]},
publisher={Springer-Verlag, Berlin-New York},
date={1979},
volume={97},
ISBN={3-540-08888-1},
review={\MR{540367}},
}

\bib{Maurey1974}{incollection}{
author={Maurey, Bernard},
title={Type et cotype dans les espaces munis de structures locales
inconditionnelles},
date={1974},
booktitle={S\'{e}minaire {M}aurey-{S}chwartz 1973--1974: {E}spaces
{$L^{p}$}, applications radonifiantes et g\'{e}om\'{e}trie des espaces de
{B}anach, {E}xp. {N}os. 24 et 25},
pages={25},
review={\MR{0399796}},
}

\bib{Pel1960}{article}{
author={Pe{\l}czy\'{n}ski, Aleksander},
title={Projections in certain {B}anach spaces},
date={1960},
ISSN={0039-3223},
journal={Studia Math.},
volume={19},
pages={209\ndash 228},
url={https://doi-org/10.4064/sm-19-2-209-228},
review={\MR{126145}},
}

\bib{Popa1982}{article}{
author={Popa, N.},
title={Uniqueness of the symmetric structure in {$L_{p}(\mu )$} for
{$0<p<1$}},
date={1982},
ISSN={0035-3965},
journal={Rev. Roumaine Math. Pures Appl.},
volume={27},
number={10},
pages={1061\ndash 1089},
review={\MR{688435}},
}

\bib{Shapiro1976}{article}{
author={Shapiro, Joel~H.},
title={Mackey topologies, reproducing kernels, and diagonal maps on the
{H}ardy and {B}ergman spaces},
date={1976},
ISSN={0012-7094},
journal={Duke Math. J.},
volume={43},
number={1},
pages={187\ndash 202},
url={http://projecteuclid.org/euclid.dmj/1077311499},
review={\MR{500100}},
}

\bib{Temlyakov1998c}{article}{
author={Temlyakov, Vladimir~N.},
title={Non-linear {$m$}-term approximation with regard to the
multivariate {H}aar system},
date={1998},
ISSN={1310-6236},
journal={East J. Approx.},
volume={4},
number={1},
pages={87\ndash 106},
review={\MR{1613798}},
}

\bib{Triebel2010}{book}{
author={Triebel, Hans},
title={Theory of function spaces},
series={Modern Birkh\"{a}user Classics},
publisher={Birkh\"{a}user/Springer Basel AG, Basel},
date={2010},
ISBN={978-3-0346-0415-4; 978-3-0346-0416-1},
note={Reprint of 1983 edition, Also published in 1983 by
Birkh\"{a}user Verlag},
review={\MR{3024598}},
}

\bib{Woj1984}{article}{
author={Wojtaszczyk, Przemys{\l}aw},
title={{$H_{p}$}-spaces, {$p\leq 1$}, and spline systems},
date={1984},
ISSN={0039-3223},
journal={Studia Math.},
volume={77},
number={3},
pages={289\ndash 320},
url={https://doi-org/10.4064/sm-77-3-289-320},
review={\MR{745285}},
}

\bib{Woj1997}{article}{
author={Wojtaszczyk, Przemys{\l}aw},
title={Uniqueness of unconditional bases in quasi-{B}anach spaces with
applications to {H}ardy spaces. {II}},
date={1997},
ISSN={0021-2172},
journal={Israel J. Math.},
volume={97},
pages={253\ndash 280},
url={https://doi-org/10.1007/BF02774040},
review={\MR{1441252}},
}

\bib{Woj2000}{article}{
author={Wojtaszczyk, Przemys{\l}aw},
title={Greedy algorithm for general biorthogonal systems},
date={2000},
ISSN={0021-9045},
journal={J. Approx. Theory},
volume={107},
number={2},
pages={293\ndash 314},
url={https://doi-org/10.1006/jath.2000.3512},
review={\MR{1806955}},
}

\bib{Woj2014}{article}{
author={Wojtaszczyk, Przemys{\l}aw},
title={On left democracy function},
date={2014},
ISSN={0208-6573},
journal={Funct. Approx. Comment. Math.},
volume={50},
number={2},
pages={207\ndash 214},
url={https://doi-org/10.7169/facm/2014.50.2.1},
review={\MR{3229057}},
}

\end{biblist}
\end{bibdiv}
\end{document}